\documentclass[11pt]{amsart}
\usepackage{graphicx,pdflscape,rotating}
\usepackage[usenames]{color}
\usepackage{amssymb,amscd,amsthm,array}
\usepackage{graphicx,tikz,subfigure}
\usepackage{enumitem}
\usepackage{comment}
\usetikzlibrary{arrows,automata,chains,fit,shapes}
\usepackage{xcolor}
\pagestyle{plain}

\textwidth = 6.5 in \textheight = 9 in \oddsidemargin = 0.0 in
\evensidemargin = 0.0 in \topmargin = 0.0 in \headheight = 0.0 in
\headsep = 0.0 in
\parskip = 0.1in
\parindent = 0.0in

\newcommand{\Z}{\mathbb{Z}}
\newcommand{\N}{\mathbb{N}}

\newcommand{\R}{\mathbb{R}}

\newcommand{\BB}{{\mathcal B}}
\newcommand{\Bvuw}{{\mathcal B}_v^{u,w}}
\newcommand{\Lv}{\mathcal{L}_v}

\newcommand{\LL}{\mathcal{L}}

\newcommand{\DD}{\mathcal D}
\newcommand{\OO}{\mathcal O}

\newcommand{\ti}{{t^{-1}}}
\newcommand{\ai}{{a^{-1}}}

\newcommand{\x}{{\bf x}}
\newcommand{\xx}{{\bf x}}

\newcommand{\wi}{{\bf w}^{(i)}}
\newcommand{\wf}[1]{{\bf w}^{(#1)}}
\newcommand{\wwi}{w^{(i)}}
\newcommand{\z}{{\bf z}}
\newcommand{\y}{{\bf y}}

\newcommand{\sgn}{\ensuremath{\textnormal{sign}}}

\newcommand{\fn}{\left\lfloor\frac{n}{2}\right\rfloor}
\newcommand{\fne}{\frac{n}{2}}
\newcommand{\ord}{\le_{u,w}}

\newcommand{\kx}{{k_{\xx}}}
\newcommand{\ky}{{k_{\y}}}

\newcommand\OOn{\{|\OO_n(N)|\}_{N \in \N}}

\newtheorem{theorem}{Theorem}[section]
\newtheorem{corollary}[theorem]{Corollary}

\newtheorem{lemma}[theorem]{Lemma}

\theoremstyle{definition}

\theoremstyle{remark}

\title{A new proof of the growth rate of the Solvable Baumslag-Solitar Groups}

\author[Jennifer Taback] {Jennifer Taback}
\address{Department of Mathematics, Bowdoin College, Brunswick, ME 04011}
\email{jtaback@bowdoin.edu}

\author[Alden Walker] {Alden Walker}
\address{Center for Communications Research, La Jolla, CA 92121}
\email{akwalke@ccrwest.org}

\thanks{The first author acknowledges support from Simons Foundation
grant 31736 to Bowdoin College.  Both authors thank Moon Duchin, Rob Kropholler and Murray Elder for insightful conversations during the writing of this paper.}

\date{\today}

\begin{document}

\begin{abstract}
We exhibit a regular language of geodesics for a large set of elements of $BS(1,n)$ and show that the growth rate of this language is the growth rate of the group.  This provides a straightforward calculation of the growth rate of $BS(1,n)$, which was initially computed by Collins, Edjvet and Gill in \cite{CEG}.  Our methods are based on those we develop in \cite{TW} to show that $BS(1,n)$ has a positive density of elements of positive, negative and zero conjugation curvature, as introduced by Bar-Natan, Duchin and Kropholler in \cite{BDK}.
\end{abstract}

\maketitle

\section{Introduction}
In this paper we compute the growth rate of the solvable Baumslag-Solitar groups 
\[BS(1,n) = \langle a,t | tat^{-1} = a^n \rangle
\]
for $n \geq 2$.  
The first computation of this growth rate is due to Collins, Edjvet and Gill in \cite{CEG} who additionally exhibit the growth series for the group.  Bucher and Talambutsa in \cite{BT} reprove the results in \cite{CEG} for prime $n$; their methods involve understanding the action of the group on its Bass-Serre tree.  Their goal is to show that the minimal exponential growth rates of the solvable Baumslag-Solitar group $BS(1,n)$ and the lamplighter group $L_n = \Z_n \wr \Z$ coincide for prime $n>2$ but differ for $n=2$.  In both \cite{BT} and \cite{CEG}, as well as in the proofs below, the rate of growth is computed to be the reciprocal of a root of a particular polynomial.  When $n>2$ is even our polynomials are much simpler than those in \cite{CEG}, and they match those in both references in the remaining cases.

Our proofs rely on a series of techniques developed by the authors in \cite{TW} to produce a geodesic representative for an element of $BS(1,n)$ given in a standard normal form.
In \cite{TW} we prove that $BS(1,n)$ has a positive density of elements of positive, negative and zero conjugation curvature, as introduced by Bar-Natan, Duchin and Kropholler in \cite{BDK}.  
A direct consequence of our methods for understanding geodesic words is the computation of the growth rate contained below.
We present a concise version of the arguments in \cite{TW} and refer the reader to that paper for more details.

Briefly, our approach is as follows.  
Given any element of $BS(1,n)$ in a standard normal form, we
parametrize a set of paths representing the element which we show contains a geodesic.  
These paths
come in four basic ``shapes.''
Focusing on geodesic paths of one particular shape, we prove that the set of all paths of this shape form a regular language, and exhibit a finite state automaton which accepts it.
This allows us to analyze the growth rate of this set, which we show to be identical to the growth rate of the group $BS(1,n)$.  
This is analogous to the work of Brazil in \cite{Brazil}, who follows the same outline to show that $BS(1,n)$ has rational growth for all $n>1$.  
He remarks that
one should be able to use his methods to calculate the exact growth rate but does not do so.

Also of interest for $BS(1,n)$ is the conjugacy growth rate.  
In recent work, Ciobanu, Evetts and Ho in \cite{CEH} show that the conjugacy growth rate for BS(1,n) is identical to the standard growth rate, using the presentation given above.  If $G$ is a group with finite generating set $S$, the conjugacy growth function measures the number of conjugacy classes intersecting the ball of radius $m$ in $\Gamma(G,S)$.  They show that the corresponding generating function, called the conjugacy growth series, is transcendental.  These results provide positive evidence towards two conjectures: first, that the conjugacy and standard growth rates are identical in finitely presented groups, and second, that only virtually abelian groups have rational conjugacy growth series.

\section{Representations of integers and geodesic paths}
\label{section:min_rep}

\subsection{Background and approach}
For $n \in \N$ with $n > 1$, the solvable Baumslag-Solitar group $BS(1,n)$ has presentation $$BS(1,n) = \langle a,t | tat^{-1} = a^n \rangle.$$  
We consider elements of $BS(1,n)$ in the standard normal form, namely each $g \in BS(1,n)$ can be written uniquely as $t^{-u}a^vt^w$ where  $u,v,w \in \Z$ and $u,w \geq 0$, with the additional requirement that if $n|v$ then $uw=0$. 
If $n|v$ but $uw \neq 0$ then the group relator can be applied to simplify the normal form expression.  When we write $g=t^{-u}a^vt^w$ we will assume that these conditions are satisfied.

Our approach to finding geodesic words representing elements of $BS(1,n)$ builds on \cite{EH}, where it is shown that any geodesic takes one of a small number of prescribed forms.
We create a vector from the exponents of the generator $a$ in any one of these forms, which is related to the horizontal distance traveled by the path in the Cayley graph $\Gamma(BS(1,n),\{a,t\})$.
We then develop criteria to determine when a vector of exponents corresponds to a geodesic path.  This strategy is described more fully in the sections below, with proofs of the statements provided in \cite{TW}.

\subsection{The digit lattice}
In order to produce a geodesic representative of a given group
element \hbox{$g = t^{-u}a^vt^w$}, 
we begin with an investigation of finite length vectors with integer entries having bounded absolute value.
We describe an algorithm to translate this vector, which we also refer to as a digit sequence, into a path in $\Gamma(BS(1,n),\{a,t\})$.  
Our goal is to impose simple conditions on the sequence of digits so that the resulting path will be geodesic.

A digit sequence has an associated integer number, 
which  becomes the exponent of $a$ in the standard normal form for any path associated with this digit sequence.
We formalize the concept of digit sequences
using the direct sum $\bigoplus_{i\in\N} \Z$, where we take the convention that
$0 \in \N$.  
Given a vector
$\xx = (x_0, x_1, \dots) \in \bigoplus_{i\in\N}\Z$, 
define the function
$\Sigma:\bigoplus_{i\in\N}\Z \rightarrow \R$ by 
\[
\Sigma(\xx)=\sum_{i \in \N} x_in^{i}.
\]

For any $v \in \Z$, let $\Lv = \Sigma^{-1}(v)$ be the
set of vectors $\xx \in \bigoplus_{i\in\N}\Z$ with
$\Sigma(\xx) = v$.  
We establish some notation for the remainder of this paper.  Given a vector $\x$,
\begin{itemize}[itemsep=5pt]
    \item the coordinates of $\xx$ will be written with a matching non-bold letter, for example, $x_i$,
    \item $\kx$ will denote the index of the final nonzero coordinate in $\x$, and 
    \item the length of the vector is $\kx+1$.
\end{itemize}
Vector entries will be called either ``coordinates" or ``digits." 
These vectors, while constructed in $\bigoplus_{i\in\N}\Z$ or $\Lv$, have a finite number of non-zero entries; we list only those non-zero entries and write, for example, 
$\xx = (x_0, \dots, x_{\kx})$, assuming that $x_i = 0$
for $i > \kx$ and $x_{\kx} \neq 0$.

Let $\LL_0 = \Sigma^{-1}(0)$; in \cite{TW} we show that $\LL_0$ is a lattice spanned by the following set of vectors.
Define vectors $\{ \wi \}_{i \in \N}$ whose coordinates $\wwi_j$ are given by
\[
\wwi_j = \left\{ \begin{array}{ll} 1  & \textnormal{if $j=i+1$} \\
                                   -n & \textnormal{if $j=i$} \\
                                   0  & \textnormal{otherwise}
                 \end{array}\right.
\]
That is,
\[
\wi = (0, \ldots\, , 0\, ,\, \wwi_i, \wwi_{i+1}, 0, \ldots) =
(0, \underset{\ldots}{\ldots}, \underset{i-1}{0}, \underset{i}{-n}, \underset{i+1}{1}, \underset{i+2}{0},\, \underset{\ldots}{\ldots})
\]
where we indicate the index of each entry in the second expression.
In \cite{TW} we show that $\mathcal{L}_v$ is an affine lattice, for $v \in \Z$.
We use these vectors $\{ \wi \}_{i \in \N}$ to describe a deterministic algorithm which produces a geodesic representative for $g \in BS(1,n)$.

Given a group element  $g = t^{-u}a^vt^w$, we define a map
$\eta_{u,v,w}:\Lv \to \{a^{\pm},t^{\pm}\}^*$ which takes
a vector $\xx = (x_0, \dots, x_\kx) \in \Lv$ to a word
$\eta_{u,v,w}(\xx)$
representing $g$ in the following way:
\[
\eta_{u,v,w}(\xx) = 
\left\{
\begin{array}{lll}
t^{-u} a^{x_0} t a^{x_{1}} \cdots t a^{x_{\kx}}t^{w-\kx}
& \textnormal{if $\kx \leq w$} & \textnormal{(shape 1)} \\
t^{\kx-u} a^{x_{\kx}} \ti a^{x_{\kx-1}} \cdots \ti a^{x_0} t^w
& \textnormal{if $w < \kx \leq u$} & \textnormal{(shape 2)} \\
t^{-u}a^{x_0}ta^{x_{1}} \cdots ta^{x_{\kx}} t^{w-\kx}
& \textnormal{if $u \leq w<\kx$} & \textnormal{(shape 3)} \\
t^{\kx-u}a^{x_{\kx}}\ti a^{x_{\kx-1}} \cdots  \ti a^{x_{0}} t^{w}
& \textnormal{if $w<u<\kx$} & \textnormal{(shape 4).} 
\end{array}\right .
\]
Paths of shapes 1 and 3 and shapes 2 and 4 have identical expressions up to the signs of certain exponents.
Following \cite{TW} we additionally denote those geodesics of shape $1$ for which $\kx < w$ as geodesics of {\em strict} shape 1.

We now show that the length of each path above is given by one
of two expressions.  Here, $| \cdot |$ denotes the actual length of the
given path, not the word length with respect to the generating set $\{a^{\pm 1},t^{\pm 1} \}$ in the group of the element it represents.  We repeat the proof from \cite{TW} as this lemma is crucial to subsequent results.

\begin{lemma}[\cite{TW}, Lemma 3.7]\label{lemma:length_formula}
For $\xx = (x_0, \dots, x_\kx) \in \Lv$, we have
\[
|\eta_{u,v,w}(\xx)| = 
\left\{
\begin{array}{lll}
\Vert \xx \Vert_1 + u + w & \textnormal{if $\kx \leq \max(u,w)$} & \textnormal{(shapes 1 and 2)} \\ 
\Vert \xx \Vert_1 + 2\kx - |u-w| & \textnormal{otherwise} & \textnormal{(shapes 3 and 4)}
\end{array}\right. .
\]
\end{lemma}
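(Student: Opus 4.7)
My plan is to read the length formula directly off the four explicit expressions defining $\eta_{u,v,w}(\xx)$, counting the $a$-letters and the $t$-letters separately. In every shape the $a$-blocks contribute exactly $\sum_{i=0}^{\kx} |x_i| = \Vert \xx \Vert_1$ letters, so the problem reduces to computing the total number of $t$-letters in each case as a function of $u$, $w$, and $\kx$, being careful with the signs of the leading and trailing $t$-exponents.

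For shapes 1 and 2 I expect the $t$-count to be exactly $u + w$. In both shapes, the interior of the word contributes $\kx$ letters: one $t^{\pm 1}$ separator between each consecutive pair of the $\kx + 1$ $a$-blocks. I would then compute the contribution of the outer $t$-powers. In shape 1 the hypothesis $\kx \le w$ forces $w - \kx \ge 0$, so the factors $t^{-u}$ and $t^{w - \kx}$ contribute $u + (w - \kx)$ letters; in shape 2 the hypothesis $\kx \le u$ forces $\kx - u \le 0$, so the factors $t^{\kx - u}$ and $t^{w}$ contribute $(u - \kx) + w$ letters. Adding the interior $\kx$ yields $u + w$ in both cases.

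For shapes 3 and 4 the hypothesis $\kx > \max(u, w)$ reverses the sign of one of the outer exponents, introducing a second copy of $\kx$ in the $t$-count. In shape 3 the trailing factor $t^{w - \kx}$ now contributes $\kx - w$ letters rather than $w - \kx$, and combined with $u$ from $t^{-u}$ and $\kx$ from the interior the total is $u + 2\kx - w$, which equals $2\kx - |u - w|$ using $u \le w$. Shape 4 is symmetric: the leading $t^{\kx - u}$ contributes $\kx - u$ letters, producing the total $(\kx - u) + \kx + w = 2\kx - |u - w|$ since $w < u$. The only real obstacle is sign bookkeeping: one must verify in each of the four cases that the defining inequalities force the outer $t$-exponents to have the signs implicit in the formula. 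Once this is checked, the two uniform expressions in the lemma fall out cleanly, with the $|u - w|$ term encoding exactly the mismatch produced when $\kx$ exceeds both $u$ and $w$.
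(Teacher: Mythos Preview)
Your proposal is correct and follows essentially the same approach as the paper: both arguments sum the absolute values of the exponents in the explicit expressions for $\eta_{u,v,w}(\xx)$, with the $a$-blocks contributing $\Vert \xx \Vert_1$ and the $t$-letters contributing the remaining terms once the signs of the outer $t$-exponents are determined from the shape inequalities. Your write-up is in fact slightly more explicit than the paper's for shapes~1 and~2 (the paper simply asserts the first formula ``follows immediately''), but the computations for shapes~3 and~4 match the paper's line by line.
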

\begin{proof}
To prove the lemma, we sum the absolute values of the exponents in the above expressions for $\eta_{u,v,w}(\xx)$.  Accounting for the signs of the expressions, the first formula follows immediately.

For the second case, we compute the length of a path of shape 3:
\[
|\eta_{u,v,w}(\xx)| = \Vert \xx \Vert_1 + u + \kx + \kx - w
\]
and for shape 4:
\[
|\eta_{u,v,w}(\xx)| = \Vert \xx \Vert_1 + (\kx-u) +  \kx + w.
\]
Considering the relative magnitudes of $u,w$ and $\kx$, we see that the two expressions combine into the second formula of the lemma.
\end{proof}

When $\kx = \max(u,w)$, it follows easily that the two expressions for word length given in Lemma~\ref{lemma:length_formula} agree.
To verify this, when $\kx=\max(u,w)$ we have
\begin{align*}
2\kx - |u-w| & = 2\max(u,w) - |u-w| \\
           & = 2\max(u,w) - \max(u,w) + \min(u,w) \\
           & = u + w
\end{align*}

The following lemma follows directly, and a proof is provided in \cite{TW}.

\begin{lemma}[\cite{TW}, Lemma 3.9]\label{lemma:minimal_is_geodesic}
If $\xx \in \Lv$ is such that $|\eta_{u,v,w}(\xx)|$ is minimal,
then $\eta_{u,v,w}(\xx)$ is a geodesic representing the
group element $g = t^{-u}a^vt^w$.
\end{lemma}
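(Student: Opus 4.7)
The plan is to show that
\[
\min_{\xx\in\Lv}|\eta_{u,v,w}(\xx)| \;=\; d_{BS(1,n)}(e,g),
\]
the word distance from the identity to $g = t^{-u}a^vt^w$ in $\Gamma(BS(1,n),\{a,t\})$. Once this equality is established, the lemma is immediate: any minimizer of the left-hand side produces a word of geodesic length that represents $g$.

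The first step is the easy direction, namely $|\eta_{u,v,w}(\xx)|\geq d_{BS(1,n)}(e,g)$ for every $\xx\in\Lv$. For this we verify that $\eta_{u,v,w}(\xx)$ really does represent $g$. The defining relation $tat^{-1}=a^n$ allows each interior $t^{\pm 1}$ in $\eta_{u,v,w}(\xx)$ to be commuted past an adjacent power of $a$ at the cost of multiplying that exponent by $n$. A direct induction on $\kx$ (handled simultaneously for all four shapes, and reversing the order of the digits for shapes 2 and 4) shows that after pushing every interior $t$-letter to the exterior, the word collapses to $t^{-u}a^{\Sigma(\xx)}t^w$; by hypothesis $\Sigma(\xx)=v$, so this is $g$.

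For the matching lower bound I would invoke the classification of geodesics in $BS(1,n)$ from \cite{EH}, to which the authors point in the ``Background and approach'' subsection: every geodesic representative of $g$ has one of the four prescribed shapes used to define $\eta_{u,v,w}$. Given such a geodesic $W$, read off the sequence $\y=(y_0,\dots,y_k)$ of $a$-exponents appearing between consecutive $t^{\pm 1}$ letters of $W$; the same pushing computation applied to $W$ yields $\Sigma(\y)=v$, so $\y\in\Lv$, and by construction $W=\eta_{u,v,w}(\y)$. Hence some vector in $\Lv$ achieves $|\eta_{u,v,w}(\y)|=d_{BS(1,n)}(e,g)$, giving the lower bound.

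The main obstacle in this plan is the second step: one must verify that the outer $t$-exponents of a shape-$i$ geodesic for $g$ are forced to match exactly the counts $u$, $w$, $k-u$, and $w-k$ appearing in the definition of $\eta_{u,v,w}$, so that the geodesic genuinely lies in the image of $\eta_{u,v,w}|_{\Lv}$ rather than that of $\eta_{u',v,w'}$ for some other compatible pair $(u',w')$ with $w'-u'=w-u$. Extracting this precise normalization from \cite{EH} is the real work; once it is in hand, the rest of the argument is a short comparison of lengths.
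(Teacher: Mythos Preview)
The paper does not actually supply a proof of this lemma: it states that ``the following lemma follows directly, and a proof is provided in \cite{TW}'', and moves on. So there is no in-paper argument to compare against; one can only assess whether your outline matches what such a proof must contain.

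Your two-step plan is exactly the natural argument, and it is essentially what the authors signal by pointing to \cite{EH} in the ``Background and approach'' subsection. Step~1 (that $\eta_{u,v,w}(\xx)$ represents $g$ for every $\xx\in\Lv$) is routine and your sketch of the commutation argument is fine. Step~2 is where the content lies, and you have correctly located it: one must know that \emph{every} geodesic for $g$ arises as $\eta_{u,v,w}(\y)$ for some $\y\in\Lv$, not merely as $\eta_{u',v',w'}(\y)$ for some other triple. Your observation that the $t$-exponent sum forces $w'-u'=w-u$, together with the normal-form hypothesis (in particular that $n\nmid v$ when $uw\ne 0$), is precisely what pins down $u'=u$ and $w'=w$: if, say, a shape-1 geodesic began with $t^{-p}$ for $p>u$, collapsing it would give $\Sigma(\y)=v\,n^{p-u}$, forcing $n\mid y_0$ and allowing a shortening, contradicting geodesicity; the case $p<u$ forces $n\mid v$. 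So the ``obstacle'' you flag is real but not deep once the \cite{EH} shape classification is in hand.

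In short: your plan is correct and is the same route the authors intend; you have simply been more explicit than the paper about where the work is, and honest that you have not carried it out in full.
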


If we are given $g = t^{-u}a^vt^w$ and want to find a geodesic
for $g$, then by Lemma~\ref{lemma:minimal_is_geodesic}, it suffices
to find a vector $\xx \in \Lv$ such that 
$|\eta_{u,v,w}(\xx)|$ is minimal; we will refer to such an
$\xx$ as a minimal vector.  
As $\Lv$ is an affine lattice, 
this is equivalent to minimizing 
$|\eta_{u,v,w}(\xx+\z)|$, where $\xx \in \Lv$ is any
vector and $\z \in \mathcal{L}_0$.  Lemma~\ref{lemma:reduce_to_box} shows that some vectors $\xx$ are easily altered in this way to reduce $|\eta_{u,v,w}(\xx)|$.  We will refer to the change from $\x$ to $\x+\z$ in this way as {\em reducing} $\x$.

For $n \geq 3$, let $\Bvuw \subseteq \Lv$ be defined as the set of 
$\xx = (x_0, \dots, x_\kx) \in \Lv$ satisfying the following conditions.

\begin{enumerate}
\item If $i<\kx$, then $|x_i| \le \fn$.
\item If $i=\kx < \max(u,w)$, then $|x_i| \le \fn$.
\item If $i=\kx \ge \max(u,w)$, then $|x_i| \le \fn + 1$.
\end{enumerate}
When $n=2$, we define $\Bvuw$ as above, replacing the third inequality with $|x_i| \le \fn + 2$.

Note that the entries of vectors in $\Bvuw$ are uniformly bounded in absolute value by $\fn$ with the exception of the final coordinate, which in some cases can be slightly larger in absolute value.
With the goal of finding minimal vectors in $\Bvuw$, this modified bound on the final digit results from certain examples where a larger final digit produces a shorter path.

In Lemma~\ref{lemma:reduce_to_box} below we show that given $\x \in 
\Lv$, we can find a vector $\y \in \Bvuw \subseteq \Lv$ so that
$|\eta_{u,v,w}(\y)| \le |\eta_{u,v,w}(\xx)|$.
Since $\eta_{u,v,w}(\xx)$ and $\eta_{u,v,w}(\y)$
represent the same group element, this implies that finding a geodesic for a
group element is equivalent to searching for a minimal vector within $\Bvuw$.  For such $\x$ and $\y$, we will
write $\xx \ord \y$ to mean
that $|\eta_{u,v,w}(\y)| \le |\eta_{u,v,w}(\xx)|$.  Note that
although $\ord$ is transitive, it is not a partial order because it is
not antisymmetric.  However, it still makes sense to refer to vectors
as being minimal with respect to the relation.

The following lemma follows immediately.
\begin{lemma}[\cite{TW}, Lemma 3.10]
\label{lemma:reduce_to_box}
If $\xx  \notin \BB_v^{u,w}$, then there exists
$\z \in \mathcal{L}_0$ so that $\xx + \z \in \BB_v^{u,w}$
and 
\[
\xx + \z \ord \xx.
\]
Consequently, if $\xx \in \Bvuw$ is minimal in $\Bvuw$, then $\eta_{u,v,w}(\xx)$
is geodesic.
\end{lemma}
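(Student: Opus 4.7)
The plan is to construct $\z$ as an accumulation of ``carries'' using the generators $\{\wi\}$ of $\LL_0$, and then verify via Lemma~\ref{lemma:length_formula} that the net change in $|\eta_{u,v,w}|$ is nonpositive. A single carry $\x \mapsto \x + \sgn(x_i)\wi$ replaces the pair $(x_i, x_{i+1})$ by $(x_i - \sgn(x_i)n,\, x_{i+1} + \sgn(x_i))$, preserves $\Sigma(\x)$, and is exactly the digit-carry of balanced base-$n$ arithmetic. The algorithm applies such carries in the standard lowest-index-first order, stopping as soon as the resulting vector lies in $\Bvuw$. For interior positions $i < \kx$, the carry does not grow $\kx$, so the active case of Lemma~\ref{lemma:length_formula} is preserved; when $|x_i| \ge \fn + 1$ the carry reduces $|x_i|$ by at least $1$ while $|x_{i+1}|$ changes by at most $1$, giving a weak decrease of $\|\x\|_1$ and hence of $|\eta_{u,v,w}(\x)|$.

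The final coordinate ($i = \kx$) is the heart of the argument and dictates the relaxed bounds in conditions (2)--(3). Such a carry increases $\kx$ by one, and the analysis splits by whether the carry crosses $\max(u,w)$. If $\kx < \max(u,w)$ then $\kx + 1 \le \max(u,w)$ and the length formula $\|\x\|_1 + u + w$ is active both before and after, so the carry is length-nonincreasing precisely when $|x_\kx| > \fn$. If $\kx \ge \max(u,w)$ the formula is $\|\x\|_1 + 2\kx - |u-w|$, so the carry costs $+2$ from the $2\kx$ term, and for $n \ge 3$ a direct computation shows the carry is length-nonincreasing exactly when $|x_\kx| \ge \fn + 2$, matching condition (3). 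The case $n = 2$ is the main obstacle: here a single top carry always increases the length by $1$, but the composite move $\x \mapsto \x + 2\,\sgn(x_\kx)\,\wf{\kx}$ (two consecutive top carries) changes $\|\x\|_1$ by $-2$ while the $2\kx$ term grows by $+2$, leaving the length unchanged. This double-carry step reduces $|x_\kx|$ by $2n = 4$ and is available whenever $|x_\kx| > \fn + 2$, after which a standard interior cleanup handles any overflow into $|x_{\kx+1}|$. Thus the further relaxation of the final-digit bound to $\fn + 2$ for $n = 2$ is both necessary (a tighter bound would force length-increasing single carries) and sufficient.

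The corollary is then immediate. Given any $\x \in \Bvuw$ that is minimal with respect to $\ord$ in $\Bvuw$, and any $\y \in \Lv$, applying the first part of the lemma to $\y$ produces $\y' \in \Bvuw$ with $\y' \ord \y$; by minimality of $\x$, $\x \ord \y' \ord \y$, so $\x$ has minimum length in all of $\Lv$, and Lemma~\ref{lemma:minimal_is_geodesic} concludes that $\eta_{u,v,w}(\x)$ is geodesic.
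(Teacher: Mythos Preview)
The paper does not actually prove this lemma here; it is stated with the preface ``the following lemma follows immediately'' and the proof is deferred to \cite{TW}, Lemma~3.10. So there is no in-paper argument to compare against.

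Your sketch is the natural carry-based reduction and is essentially correct. The three computations you isolate---that an interior carry at a digit with $|x_i| > \fn$ never increases $\|\x\|_1$ and leaves the active case of Lemma~\ref{lemma:length_formula} unchanged; that a top carry with $\kx \ge \max(u,w)$ incurs an extra $+2$ from the $2\kx$ term and therefore needs $|x_\kx| \ge \fn + 2$ to break even; and that for $n=2$ one compensates by adding $2\,\sgn(x_\kx)\wf{\kx}$ instead of a single $\wf{\kx}$---are exactly what drives the asymmetric bounds (1)--(3) in the definition of $\Bvuw$. Your derivation of the ``consequently'' clause via Lemma~\ref{lemma:minimal_is_geodesic} is also correct. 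Two small points you pass over: termination of the lowest-index-first procedure (once index $i$ is processed its digit stays in range, and $\kx$ is a priori bounded in terms of $|v|$), and the possibility that a carry at $i=\kx-1$ zeroes out $x_\kx$ and hence \emph{decreases} $\kx$---but this only makes the length drop further, so it is harmless. Neither is a genuine gap.
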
 

In the sections below,we will give
some simple conditions to certify that $\x$ is minimal.

\subsection{Minimal vectors for $n$ odd}
\label{section:geodesics_n_odd}

Let $g = t^{-u}a^vt^w \in BS(1,n)$ for $n$ odd.
Lemma~\ref{lemma:reduce_to_box} shows that $\Bvuw$ is nonempty
and that if $\xx \in \Lv$ is a
minimal vector in $\Bvuw$, then $\eta_{u,v,w}(\xx)$ is geodesic for $g$.
The next lemma shows that when $n$ is odd, the set $\Bvuw$ contains at most two vectors.

\begin{lemma}[\cite{TW}, Lemma 3.13]
\label{lemma:odd_box}
Let $n \geq 3$ be odd and $\xx \in \Bvuw$.
\begin{enumerate}[itemsep=5pt]
\item If $k_\xx < \max(u,w)$, then $|\Bvuw| = 1$.
\item If $k_\xx \ge \max(u,w)$, then $|\Bvuw| \le 2$.
If $|\Bvuw| = 2$, then $\Bvuw$ has the form
\[
\Bvuw = \{\xx, \xx + \epsilon{\bf w}^{(k_\xx)}\},
\]
where $\epsilon \in \{-1,1\}$.  Moreover, $\y\in\Bvuw$ is not minimal
if and only if $\ky > \max(u,w)$ and the final digits of $\y$ are
$(\delta\fn, -\delta)$, 
where $\delta \in \{\pm 1\}$.
\end{enumerate}
\end{lemma}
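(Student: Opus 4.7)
The plan is to leverage the classical uniqueness of the balanced base-$n$ expansion for odd $n\ge 3$: every integer $v$ has a unique vector $\xx^*$ with $\Sigma(\xx^*)=v$ and $|x_i^*|\le \fn$ for all $i$. Since these digit bounds satisfy all three defining conditions of $\Bvuw$, we always have $\xx^*\in\Bvuw$, and this vector will serve as the anchor for the analysis.

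First I would classify any other element $\y\in\Bvuw$. Uniqueness of $\xx^*$ forces some digit of $\y$ to exceed $\fn$ in absolute value, but the defining conditions of $\Bvuw$ permit this only at position $k_\y$ and only when $k_\y\ge\max(u,w)$, forcing $|y_{k_\y}|=\fn+1$. Writing $y_{k_\y}=\delta(\fn+1)$ and using the identity $\delta(\fn+1)=\delta n-\delta\fn$, a direct computation shows that $\y+\delta\wf{k_\y}$ has all digits in $[-\fn,\fn]$, hence equals $\xx^*$ by uniqueness. In particular $k_{\xx^*}=k_\y+1$ and the top two digits of $\xx^*$ are $(-\delta\fn,\delta)$. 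At most one such folded partner can exist (its leading sign is forced by that of $x^*_{k_{\xx^*}}$), so $|\Bvuw|\le 2$.

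Part (1) is now immediate: if $\xx\in\Bvuw$ satisfies $k_\xx<\max(u,w)$, then $\xx$ cannot be a folded vector, so $\xx=\xx^*$, and a folded partner would require $k_\y=k_{\xx^*}-1\ge\max(u,w)$, contradicting $k_{\xx^*}<\max(u,w)$. For the structural claim in part (2), I would let $\xx$ denote the folded vector (when one exists), so $k_\xx=k_{\xx^*}-1\ge\max(u,w)$, and take $\epsilon=\sgn(x_{k_\xx})$; then $\xx+\epsilon\wf{k_\xx}=\xx^*$, giving the displayed form.

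For the moreover clause I would invoke Lemma~\ref{lemma:length_formula}. A digit-by-digit comparison shows $\|\xx^*\|_1=\|\y\|_1$ for a folded pair, and since both vectors satisfy $k\ge\max(u,w)$, the length formula gives $|\eta_{u,v,w}(\xx^*)|-|\eta_{u,v,w}(\y)|=2(k_{\xx^*}-k_\y)=2$. Thus the folded member of a pair is always strictly shorter, so the only element of $\Bvuw$ that can fail to be minimal is $\xx^*$ itself, and it fails precisely when its folded partner lies in $\Bvuw$, i.e.\ when $k_{\xx^*}>\max(u,w)$ and the last two digits of $\xx^*$ have the form $(-\delta'\fn,\delta')$; rewriting $\delta=-\delta'$ yields the stated final digits $(\delta\fn,-\delta)$. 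The hard part will be the sign and index bookkeeping, especially the boundary case $k_\xx=\max(u,w)$, where the prospective folded partner would have $k_\y=\max(u,w)-1$ and so violate condition (2) of $\Bvuw$, correctly forcing $|\Bvuw|=1$ there.
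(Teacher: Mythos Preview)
The paper does not actually prove this lemma: it is stated with a citation to \cite{TW}, Lemma~3.13, and the text moves directly to its algorithmic consequences without a proof environment. So there is no in-paper argument to compare your proposal against.

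On its own merits your outline is correct and is the natural route: uniqueness of the balanced base-$n$ expansion for odd $n$ gives a canonical $\xx^*\in\Bvuw$ with all digits in $[-\fn,\fn]$; any other member of $\Bvuw$ must carry a single digit of size $\fn+1$ at its top position $k_\y\ge\max(u,w)$, and unfolding via $\pm\wf{k_\y}$ lands back on $\xx^*$ with the forced top-two pattern $(-\delta\fn,\delta)$. The length comparison $|\eta(\xx^*)|-|\eta(\y)|=2$ then pins down which of the two is minimal, and inverting the sign convention gives the stated final-digit criterion $(\delta\fn,-\delta)$.

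Two places to tighten. First, the displayed form $\Bvuw=\{\xx,\xx+\epsilon\wf{k_\xx}\}$ is only literally correct when $\xx$ is the folded vector; if one starts from $\xx=\xx^*$ the companion is $\xx^*-\epsilon\wf{k_{\xx^*}-1}$, not $\xx^*\pm\wf{k_{\xx^*}}$. You handle this by re-choosing $\xx$, but since the lemma fixes $\xx$ at the outset you should say explicitly that the displayed form is asserted for the element of smaller index (equivalently, the one with $|x_{k_\xx}|=\fn+1$), rather than quietly reassigning. Second, in your length comparison you invoke the second formula of Lemma~\ref{lemma:length_formula} for both vectors, but at the boundary $k_\y=\max(u,w)$ the folded vector falls under the first formula. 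The paper notes just after Lemma~\ref{lemma:length_formula} that the two formulas agree when $k=\max(u,w)$, so your difference of $2$ survives, but it is worth citing that remark rather than asserting both lie in the ``otherwise'' case.
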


Lemma~\ref{lemma:odd_box} presents a direct algorithm for producing a geodesic representative
of $g = t^{-u}a^vt^w$ in $B(1,n)$ when $n$ is odd.  Begin with any vector in $\Lv$; if it does not lie in $\Bvuw$,
reduce its digits as described above so that it does. 
Then inspect the final two digits to assess minimality, adding a basis vector as specified by the theorem if necessary.

\subsection{Minimal vectors for $n$ even}
\label{section:geodesics_n_even}

When $n$ is even, $\Bvuw$, as defined above, may contain more than two minimal vectors.
In order to make a consistent choice among them, we add a constraint on the absolute values of the digits.  
For $n$ even, we will say that $\x$ is minimal
if $|\eta_{u,v,w}(\x)|$ is minimal and the vector of absolute values of $\x$
is lexicographically minimal among all such vectors.  For this lexicographic order,
smaller-index digits are considered more significant.

As there may be many more vectors in $\Bvuw$ to consider when $n$ is even, the question of deciding
whether a vector is minimal is more complicated.  
For us it will suffice to
characterize minimality for a subset of all vectors in $\Bvuw$, namely, those vectors $\x \in \Bvuw$ with $\kx < w$.
These vectors correspond to paths $\eta_{u,v,w}(\x)$ of strict shape $1$, and Lemma~\ref{lemma:adjacent_digits} below
describes when such vectors are minimal.

We subdivide this case by $n=2$ and $n>2$ as the definition of $\Bvuw$ is slightly different.
Our goal is to exhibit simple local conditions to determine whether a vector $\x \in \Bvuw$ with $\kx <w$ is minimal.
Such a minimal vector will correspond to a geodesic 
$\eta_{u,v,w}(\x)$ of strict shape $1$.
We will compute the growth rate of the language of geodesics of strict shape 1, and show that this is the same as the growth rate of $BS(1,n)$.

We condense Lemmas 3.22 and 3.29 of \cite{TW} into the following lemma.
While the result is identical for $n=2$ and $n>2$, the methods of proof are slightly different.  We refer to reader to \cite{TW} for these two proofs.

\begin{lemma}[\cite{TW}, Lemmas 3.22 and 3.29] \label{lemma:adjacent_digits}
Let $n$ be even and $\x \in \Bvuw$ with $\kx < \max(u,w)$.
Then $\x$ is not minimal if and only if one of the following holds, for $\delta \in \{\pm 1\}$.
\begin{itemize}[itemsep=5pt]
\item There are two adjacent digits
of the form $(\delta\fne,\delta\fne)$.
\item There are two adjacent digits 
of the form $(\delta\fne,x_i)$ with $\sgn(x_i) = -\sgn(\delta)$.
\end{itemize}
\end{lemma}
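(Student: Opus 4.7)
The plan starts with Lemma~\ref{lemma:length_formula}: under the hypothesis $\kx < \max(u,w)$, the length $|\eta_{u,v,w}(\y)|$ equals $\|\y\|_1 + u + w$ for any $\y \in \Lv$ with $k_\y \leq \max(u,w)$, so minimality reduces to minimizing $\|\y\|_1$, with ties broken by the lex order on absolute values. All modifications I consider will remain in this regime, since $\kx + 1 \leq \max(u,w)$.

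For the ``if'' direction, I would produce explicit improving vectors $\z \in \LL_0$ for each bad pattern. For the second bad pattern, $(\delta\fne, x_{i+1})$ with $\sgn(x_{i+1}) = -\sgn(\delta)$, take $\z = \delta\wi$: the digits at $(i, i+1)$ become $(-\delta\fne, x_{i+1}+\delta)$, with the first absolute value unchanged at $\fne$ and the second dropping from $|x_{i+1}|$ to $|x_{i+1}|-1$, a strict $L^1$ decrease of $1$. For the first bad pattern, $(\delta\fne, \delta\fne)$, take $\z = \delta(\wi + \w^{(i+1)})$: using $n = 2\fne$, the triple $(x_i, x_{i+1}, x_{i+2})$ becomes $(-\delta\fne, -\delta(\fne-1), x_{i+2}+\delta)$, and a direct check shows $\|\cdot\|_1$ either strictly decreases, or stays equal while the absolute value at position $i+1$ drops from $\fne$ to $\fne-1$, yielding a lex improvement.

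For the ``only if'' direction, I would argue by contrapositive: assuming $\x$ has no bad pattern, show no nonzero $\z \in \LL_0$ improves $\x$. Write $\z = \sum c_i \wi$ and let $j_0 = \min\{i : c_i \neq 0\}$. Since $z_{j_0} = -n c_{j_0}$ and $|x_{j_0}| \leq \fne$, the change $|x_{j_0} - nc_{j_0}| - |x_{j_0}|$ is at least $n|c_{j_0}| - 2\fne \geq 0$, with equality precisely when $|c_{j_0}| = 1$ and $x_{j_0} = \sgn(c_{j_0})\fne$. Under this tight equality, a similar analysis at $j_0 + 1$ shows that a non-positive total change forces either $c_{j_0+1} = 0$ with $\sgn(x_{j_0+1}) = -\sgn(c_{j_0})$ (the second bad pattern at $(j_0, j_0+1)$) or $c_{j_0+1} = c_{j_0}$ with $x_{j_0+1} = \sgn(c_{j_0})\fne$ (the first bad pattern at $(j_0, j_0+1)$). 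Either way, a bad pattern must exist in $\x$, a contradiction. When the chain of nonzero $c_i$'s extends further, the analysis iterates, forcing a bad pattern somewhere along the support of $\z$.

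The main obstacle is the propagation step in the necessity direction. Making the claim ``the excess at $j_0$ dominates downstream savings when $|c_{j_0}| \geq 2$'' rigorous requires careful global estimates of savings along the support of $\z$, leveraging the digit bound $|x_j| \leq \fne$ at each position to limit how much can be saved. The lex tiebreaker adds bookkeeping: in an $L^1$-tied outcome, the first position where absolute values differ must show a non-decrease, which needs to be tracked together with the $L^1$ comparison. The split between $n = 2$ and $n > 2$ in the definition of $\Bvuw$ only affects the final-digit bound, which equals $\fne$ uniformly in our regime $\kx < \max(u,w)$, so a single argument handles both cases.
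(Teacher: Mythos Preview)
The paper itself does not prove this lemma: immediately after stating it the authors remark that the two proofs (for $n=2$ and $n>2$) are given in \cite{TW} and refer the reader there. So there is no in-paper argument to compare against, and I can only assess your proposal on its own merits.

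Your sufficiency (``if'') direction is correct. Both explicit modifications you write down stay in the regime $k_\y\le\max(u,w)$ (since $\kx+1\le\max(u,w)$), so the first formula of Lemma~\ref{lemma:length_formula} applies and the $\ell^1$/lex comparison transfers directly to a length/minimality comparison. The digit computations you give are accurate.

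Your necessity (``only if'') direction is, as you already acknowledge, only a sketch, and the gap is real. The analysis at the first nonzero coefficient $c_{j_0}$ is fine, and in the tight case $|c_{j_0}|=1$, $x_{j_0}=\sgn(c_{j_0})\fne$ the two subcases you isolate at index $j_0+1$ do force one of the forbidden adjacent patterns. What is missing is a genuine inductive or telescoping bound handling (i) $|c_{j_0}|\ge 2$, where the cost $n|c_{j_0}|-n$ at position $j_0$ must be shown to dominate all downstream savings, and (ii) $\z$ with non-contiguous support, where several such ``starting costs'' and intervening savings interact. One workable organization is to process indices left to right maintaining a running lower bound on $\sum_{i\le m}(|x_i+z_i|-|x_i|)$ as a function of $|c_m|$, using $z_i=c_{i-1}-nc_i$ and the uniform bound $|x_i|\le\fne$; this yields the needed inequality, but it is not yet written down. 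The lex tiebreaker also needs an explicit check: in the $\ell^1$-tied case you must identify the first index where absolute values differ and verify it does not decrease, which is exactly where the forbidden patterns re-enter. Finally, your reduction ``minimality $\Leftrightarrow$ $\ell^1$-minimality plus lex'' tacitly restricts to competitors $\y$ with $k_\y\le\max(u,w)$; this is harmless, since once you prove $\|\x+\z\|_1\ge\|\x\|_1$ for all $\z\ne 0$, any competitor with $k_\y>\max(u,w)$ has strictly greater length by the second formula of Lemma~\ref{lemma:length_formula}, but it deserves a sentence. Your observation that the $n=2$ versus $n>2$ split in the definition of $\Bvuw$ is irrelevant under the hypothesis $\kx<\max(u,w)$ is correct and may indeed allow a unified argument here, even though the source \cite{TW} treats the cases separately.
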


\section{Regular languages}
\label{section:regular_languages}

Given $u,w$, and $\x$ with $\kx <\max(u,w)$, 
Lemmas~\ref{lemma:odd_box} and ~\ref{lemma:adjacent_digits} provide a straightforward
way to determine whether $\x \in \Bvuw$ is minimal,
that is, whether $\eta_{u,v,w}(\x)$ is a geodesic, by examining the
digits of $\x$.  
Recall that if $\kx < \max(u,w)$,
we say that $\eta_{u,v,w}(\x)$ has \emph{strict} shape 1.

In this section, we show that the set of vectors
$\x$ for which there are $u,w$ so that $\eta_{u,v,w}(\x)$
is geodesic and has strict shape 1 forms a regular language.  We then
show that the set of geodesics of strict shape 1 is also
a regular language and construct automata
accepting both of these languages.  
This will allow us,
in Section~\ref{section:bs1n_growth},
to count the geodesics of strict shape 1 with a given length
and determine the growth rate of $BS(1,n)$.

\subsection{The language of strict shape 1 vectors}
\label{section:shape_1_vectors}

Let $\DD_n$ be the language of 
minimal vectors $\x \in \Bvuw$ for some $u,v,w$
where $\eta_{u,v,w}(\x)$ has strict shape $1$.

\begin{lemma}\label{lemma:lex_min_reg}
For all $n \geq 2$, the language $\DD_n$ is regular. These languages are accepted by the finite state automata shown in
Figure~\ref{fig:fsa_Dn_odd} and~\ref{fig:fsa_Dn_even}.
\end{lemma}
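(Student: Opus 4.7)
The plan is to reduce regularity of $\DD_n$ to the local digit-level characterizations of minimality supplied by Lemmas~\ref{lemma:odd_box} and~\ref{lemma:adjacent_digits}, and then to exhibit finite state automata that check these local conditions. The first step is to fix the alphabet: if $\x\in\Bvuw$ and $\eta_{u,v,w}(\x)$ has strict shape $1$, then $\kx<w\le\max(u,w)$, so case (2) of the definition of $\Bvuw$ applies to the final coordinate as well, giving $|x_i|\le\fn$ for every $i$. Consequently $\DD_n$ is contained in the set of nonempty words over the finite alphabet $A_n:=\{-\fn,\ldots,\fn\}$ whose last letter is nonzero, and the question reduces to describing which such words actually occur.

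Next I would give a purely combinatorial description of $\DD_n$. For $n\ge 3$ odd, Lemma~\ref{lemma:odd_box}(1) asserts $|\Bvuw|=1$ whenever $\kx<\max(u,w)$, so any $\x\in\Bvuw$ with $\kx<w$ is automatically minimal; consequently $\DD_n$ is exactly the set of nonempty words over $A_n$ with nonzero final letter. For $n$ even, Lemma~\ref{lemma:adjacent_digits} characterizes non-minimality of such an $\x$ as the presence of one of two explicit forbidden adjacent-digit patterns, so $\DD_n$ is the set of nonempty words over $A_n$ with nonzero final letter that avoid those two patterns. The converse containment in both cases is verified by choosing, for any word $\x$ satisfying the local conditions, the parameters $v:=\Sigma(\x)$, $u:=0$, and $w:=\kx+1$: then $\x\in\Bvuw$, $\eta_{u,v,w}(\x)$ is of strict shape $1$, and minimality follows from the corresponding lemma.

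With the combinatorial description in hand, I would construct the required automata. In both parities the state space encodes the most recent digit read together with an initial state, so that forbidden two-letter patterns can be checked as forbidden transitions; accepting states are those labeled by nonzero digits, enforcing the nonzero-final-letter requirement. For $n$ odd no transitions are forbidden (the automaton is complete on $n+1$ states), yielding the automaton of Figure~\ref{fig:fsa_Dn_odd}. For $n$ even the transitions corresponding to the two patterns of Lemma~\ref{lemma:adjacent_digits} are deleted, yielding the automaton of Figure~\ref{fig:fsa_Dn_even}. Correctness in each case is immediate from the characterization established in the preceding step.

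The main obstacle is bookkeeping rather than depth: one must invoke the strict shape $1$ hypothesis carefully to rule out the enlarged final-digit bounds of $\Bvuw$ (in particular the $|x_\kx|\le\fn+1$ or $\fn+2$ cases that would otherwise enlarge the alphabet), and one must match the forbidden-pair description of Lemma~\ref{lemma:adjacent_digits} exactly against the transitions deleted in Figure~\ref{fig:fsa_Dn_even}. No pumping-style arguments or closure operations are needed, since the conditions defining $\DD_n$ are manifestly local once the alphabet is fixed.
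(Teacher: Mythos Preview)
Your proposal is correct and follows essentially the same approach as the paper: fix the alphabet via the strict shape~1 bound, invoke Lemmas~\ref{lemma:odd_box} and~\ref{lemma:adjacent_digits} to reduce membership in $\DD_n$ to local (adjacent-pair) digit constraints plus a nonzero final digit, and then read off regularity. The paper's proof is terser---it does not spell out the converse containment with an explicit choice of $(u,v,w)$ as you do---but the logical skeleton is the same.

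One minor point of friction: you describe the state set as ``the most recent digit read together with an initial state,'' which gives roughly $n$ states, and then assert this ``yields'' the automata of Figures~\ref{fig:fsa_Dn_odd} and~\ref{fig:fsa_Dn_even}. Those figures have only $2$ and $4$ (non-fail) states respectively, obtained by collapsing digits into the classes $\{0\}$, $\{x:0<|x|<\fne\}$, $\{\fne\}$, $\{-\fne\}$ (and merging the latter three for odd $n$). Your per-digit automaton is correct and recognizes the same language, but to literally match the figures you would need to observe that states labeled by digits in the same class are equivalent and merge them. This is a presentation detail, not a gap in the argument.
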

\begin{proof}
Note that $\DD_n$ is the subset of words in
$\left\{-\fn, \ldots, \fn\right\}^*$ which satisfy
no condition of Lemmas~\ref{lemma:odd_box},
or~\ref{lemma:adjacent_digits} and do
not end with a $0$.
The conditions of Lemmas~\ref{lemma:odd_box} and
~\ref{lemma:adjacent_digits} are local conditions
which are therefore regular, and the finite automata shown in Figures~\ref{fig:fsa_Dn_odd} and~\ref{fig:fsa_Dn_even}
accept only digit strings which do not satisfy any of them.
\end{proof}
We remark that $\DD_n$ is the language of digit sequences
which are minimal for some $u,v,w$. 
Thus the pattern
$(\fn, -1)$ is permitted at the end of a sequence,
because if  $w$ is sufficiently large, this pattern can exist in a minimal vector.

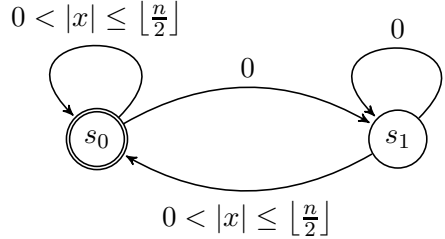
\begin{figure}[ht!]
\tikzset{every state/.style={minimum size=2em}}
\begin{center}
\begin{tikzpicture}[->,>=stealth',shorten >=1pt,auto, node distance=4cm, semithick]

\node[state,accepting] (A) {$s_0$};
\node[state] (B) [right of =A] {$s_1$};

\path(A)
edge [loop] node[above] {$0 < |x| \leq \fn$} (A)
edge [bend left] node[above]{$0$} (B);

\path(B)
edge [loop] node[above] {$0$} (B)
edge [bend left] node[below] {$0 < |x| \leq \fn$} (A);

\end{tikzpicture}
\end{center}
\caption{A finite state automaton accepting the regular language
$\DD_n$ when $n$ is odd. An edge label for a range of $x$
values represents that many single edges with labels in the
appropriate interval.
The state $s_0$ is the start and accept state.}
\label{fig:fsa_Dn_odd}
\end{figure}

\begin{figure}[ht!]
\tikzset{every state/.style={minimum size=2em}} 
\begin{center}
\resizebox {\columnwidth} {!} {
\begin{tikzpicture}[->,>=stealth',shorten >=1pt,auto, node distance=4cm,semithick]

\node[state,accepting] (A) {$s_0$};
\node[state,accepting] (B) [right of =A] {$s_1$};
\node[state, draw=none] (BB) [right of=B] {};
\node[state,accepting] (C) [left of = A] {$s_2$};
\node[state, draw=none] (CC) [left of=C] {};
\node[state] (D) [below of =A] {$s_3$};

\path(A)
edge [loop] node[above] {$0 < |x| < \frac{n}{2}$} (A)
edge [bend right] node[right]{$0$} (D)
edge [bend left] node[above]{$\frac{n}{2}$} (B)
edge [bend right] node[above]{$-\frac{n}{2}$} (C);

\path(B)
edge node[below]{$0 < x < \frac{n}{2}$} (A)
edge node[below]{$0$} (D)
edge [shorten >=75pt, dashed] node[above] {$x < 0$} (BB)
edge [shorten >=75pt, dashed] node[below] {$x = \frac{n}{2}$} (BB);

\path(C)
edge node[below] {$-\frac{n}{2} < x < 0$} (A)
edge node[below]{$0$} (D)
edge [shorten >=75pt, dashed] node[above] {$x > 0$} (CC)
edge [shorten >=75pt, dashed] node[below] {$x = -\frac{n}{2}$} (CC);

\path(D)
edge [bend right] node[right,pos=.77] {$0 < |x| < \frac{n}{2}$} (A)
edge [bend right] node[below, pos=.25]{$\frac{n}{2}$} (B)
edge [bend left] node[below, pos=.25]{$-\frac{n}{2}$} (C)
edge [loop below] node[below] {$0$} (D);

\end{tikzpicture}
}
\end{center}
\caption{A finite state automaton accepting the regular language
$\DD_n$ when $n$ is even. An edge label for a range of $x$
values represents that many single edges with labels in the
appropriate interval.
The start state is $s_0$, and all states except $s_3$ are accept states.
The dashed edges terminate in a fail state.}
\label{fig:fsa_Dn_even}
\end{figure}
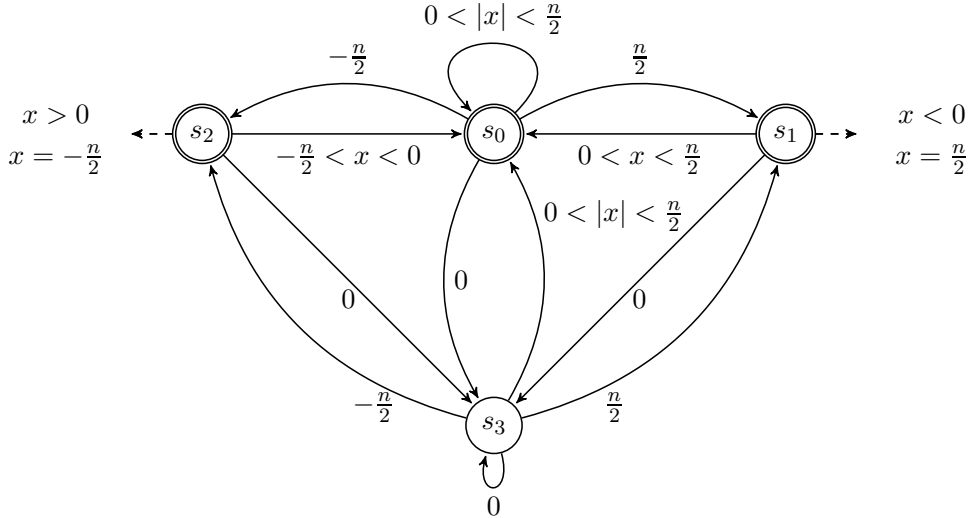

For technical reasons, it will be helpful in Section~\ref{section:shape_1_paths}
to consider a language very closely related to $\DD_n$: the
language of strict shape 1 vectors which are allowed to end with a string of $0$ digits.
That is, the language of vectors which satisfy no condition of Lemma~\ref{lemma:odd_box} or 
\ref{lemma:adjacent_digits}.  This simply relaxes the last
condition from the definition of $\DD_n$.  We denote this new language by $\DD_n'$.
Note that $\DD_n \subseteq \DD_n'$.  Finite state automata which accept $\DD_n'$
are shown in Figure~\ref{fig:fsa_Dnprime}.
These simpler automata are obtained by merging the state
keeping track of the $0$ digit (that is, $s_1$ for $n$ odd and $s_3$ for $n$ even) into the start
state $s_0$.

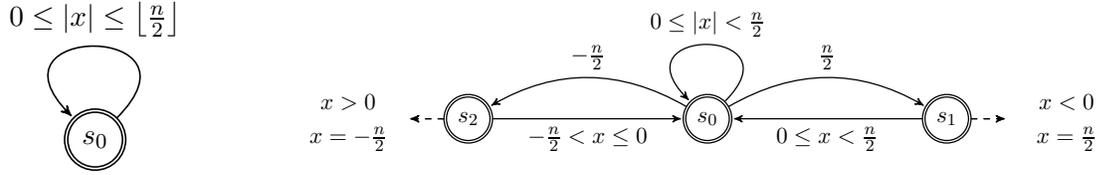
\begin{figure}[ht!]
\tikzset{every state/.style={minimum size=2em}}
\begin{center}
\begin{minipage}{0.17\columnwidth}
\centering
\begin{tikzpicture}[->,>=stealth',shorten >=1pt,auto, node distance=4cm, semithick]

\node[state,accepting] (A) {$s_0$};

\path(A)
edge [loop] node[above] {$0 \le |x| \leq \fn$} (A);

\end{tikzpicture}
\end{minipage}
\begin{minipage}{0.82\columnwidth}
\centering
\resizebox {\columnwidth} {!} {
\begin{tikzpicture}[->,>=stealth',shorten >=1pt,auto, node distance=4cm,semithick]

\node[state,accepting] (A) {$s_0$};
\node[state,accepting] (B) [right of =A] {$s_1$};
\node[state, draw=none] (BB) [right of=B] {};
\node[state,accepting] (C) [left of =A] {$s_2$};
\node[state, draw=none] (CC) [left of=C] {};

\path(A)
edge [loop] node[above] {$0 \le |x| < \frac{n}{2}$} (A)
edge [bend left] node[above]{$\frac{n}{2}$} (B)
edge [bend right] node[above]{$-\frac{n}{2}$} (C);

\path(B)
edge node[below]{$0 \le x < \frac{n}{2}$} (A)
edge [shorten >=75pt, dashed] node[above] {$x < 0$} (BB)
edge [shorten >=75pt, dashed] node[below] {$x = \frac{n}{2}$} (BB);

\path(C)
edge node[below] {$-\frac{n}{2} < x \le 0$} (A)
edge [shorten >=75pt, dashed] node[above] {$x > 0$} (CC)
edge [shorten >=75pt, dashed] node[below] {$x = -\frac{n}{2}$} (CC);
\end{tikzpicture}
}
\end{minipage}

\end{center}
\caption{Finite state automata accepting the regular language
$\DD_n'$ when $n$ is odd (left figure) and even (right figure).  The state
$s_0$ is the start state and all states are accept states.}
\label{fig:fsa_Dnprime}
\end{figure}

\subsection{The language of strict shape 1 geodesics}
\label{section:shape_1_paths}

The language $\DD_n$ is a subset of the full set of minimal digit sequences.  Note that $\DD_n$
is \emph{not} a language of geodesic words in $BS(1,n)$;
if $\x$ is an accepted string in $\DD_n$ then $\eta_{u,v,w}(\x)$ is a word in $BS(1,n)$, 
where for appropriate choices of $u,w$, it follows that $\eta_{u,v,w}(\x)$ is a geodesic.
Let $\OO_n$ be the language of strict shape $1$ geodesics in $BS(1,n)$.
Using $\DD_n'$ and the finite state automaton accepting it, 
we will show that $\OO_n$ is regular and exhibit a finite state automaton accepting it.

For simplicity, we will abuse notation and write $\DD_n'$ for both the language
and the finite state automaton which accepts it.
We now derive a finite state automaton which accepts $\OO_n$  from
$\DD_n'$. We will use $\OO_n$ to refer to both the language of strict shape
$1$ geodesics and the finite state automaton which accepts this language.

One nice feature of
$\DD_n$ and $\DD_n'$ is that the number of
states is independent of $n$.  This is not the case for
$\OO_n$, so we cannot exhibit a general structure analogous
to Figures~\ref{fig:fsa_Dn_odd},~\ref{fig:fsa_Dn_even}, and~\ref{fig:fsa_Dnprime}.
Instead, we describe a simple
expansion rule to derive $\OO_n$ from $\DD_n'$. As
the number of states of $\OO_n$ grows with $n$, we cannot easily
count the number of accepted paths in $\OO_n$ of a given length,
and hence compute its growth
rate using the standard method of analyzing its
transition matrix.  Here, too, we will take advantage of $\DD_n'$
and show how to derive the number of paths in $\OO_n$ of a
given length from the structure of $\DD_n'$.
This is explained in Section~\ref{section:bs1n_growth}.

In order to motivate the expansion of $\DD_n'$ to $\OO_n$, consider
the structure of geodesic paths in $BS(1,n)$ of strict shape $1$
representing $g = t^{-u}a^vt^w$, which have the form
\[
t^{-u} a^{x_0} t a^{x_{1}} \cdots t a^{x_{\kx}}t^{w-\kx}
\]
for a minimal vector $\x \in \Bvuw$.
In other words, the geodesic is composed of an initial power
of $t^{-1}$ followed by an alternating sequence of powers of
$a$ with $t$, where the powers are exactly the digits in  $\x$.  
The fact that $\eta_{v,u,w}(\x)$ has strict shape 1 requires that $\kx < w$ and hence $w- \kx >0$.
As a strict shape $1$ geodesic is allowed to end with an arbitrarily
large power of $t$, we are no longer concerned with making certain
that the digit sequence in question does not end with a $0$ digit.
Any such extraneous $0$ digits correspond in the geodesic to a higher power of $t$,
that is, to $w-\kx$.

To construct a finite state automaton
$\OO_n$ which accepts these geodesics, we must allow
initial strings of $t^{- 1}$'s, require nonempty final strings of $t$'s,
and expand each digit of $\x$ into a sequence of copies
of $a^{\pm 1}$, separated by $t$.

For both even and odd $n$, define the
\emph{$\alpha$-digit expansion} of a state $s_i$
in $\DD_n'$, where $\alpha = \lfloor \frac{n}{2} \rfloor$, to be the collection of states and transitions shown in
Figure~\ref{fig:fsa_digit_expansion}.  Specifically, for each $i$, the state
$s_i$ in $\DD_n'$ is replaced with the collection of states
$$\{s_{i,-\alpha}, s_{i,-\alpha+1}, \ldots, s_{i,0}, \ldots, s_{i,\alpha-1},s_{i,\alpha}\}$$
connected as follows.
\begin{enumerate}[itemsep=5pt]
\item For all $0\le j < \alpha$, there is an edge labeled $a$ from
$s_{i,j}$ to $s_{i,j+1}$.
\item For all $- \alpha < j \le 0$, there is an edge labeled
$a^{-1}$ from $s_{i,j}$ to $s_{i,j-1}$.
\item All edges outgoing from $s_i$ to another state $s_j$
are replaced by edges outgoing from $s_{i,\ell}$ to 
the state $s_{j,0}$ in the $\alpha$-digit expansion of $s_j$, with label $t$.
\end{enumerate}

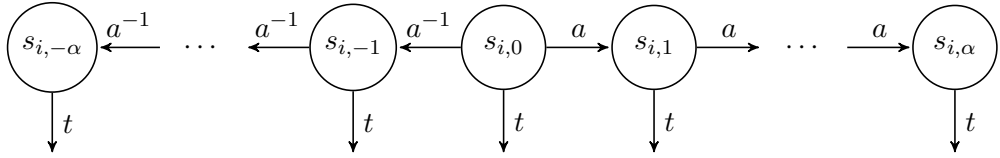
\begin{figure}[ht!]
\tikzset{every state/.style={minimum size=2.9em}} 
\begin{tikzpicture}[->,>=stealth',shorten >=1pt,auto, node distance=2cm,
semithick]

\node[state] (A) {$s_{i,-\alpha}$};
\node[state, draw=none] (B) [right of=A] {$\cdots$};
\node[state] (C) [right of=B] {$s_{i,-1}$};
\node[state] (D) [right of=C] {$s_{i,0}$};
\node[state] (E) [right of=D] {$s_{i,1}$};
\node[state, draw=none] (F) [right of=E] {$\cdots$};
\node[state] (G) [right of=F] {$s_{i,\alpha}$};

\node[state, draw=none] (A1) [below of = A]{};
\node[state, draw=none] (C1) [below of = C]{};
\node[state, draw=none] (D1) [below of = D]{};
\node[state, draw=none] (E1) [below of = E]{};
\node[state, draw=none] (G1) [below of = G]{};

\path(A)
edge node{$t$}(A1);

\path(B)
edge node[above]{$a^{-1}$}(A);

\path(C)
edge node{$t$}(C1)
edge node[above]{$a^{-1}$}(B);

\path(D)
edge node[above]{$a^{-1}$}(C)
edge node{$t$}(D1)
edge node{$a$}(E);

\path(E)
edge node{$t$}(E1)
edge node{$a$}(F);

\path(F)
edge node{$a$}(G);

\path(G)
edge node{$t$}(G1);

\end{tikzpicture}
\vspace{-1.25cm}
\caption{The $\alpha$-digit expansion of a state $s_i$ in $\DD_n'$.}
\label{fig:fsa_digit_expansion}
\end{figure}

We define $\OO_n$ to be the finite state automaton which is obtained
by performing the $\alpha$-digit expansion on every state
in $\DD_n'$ and prepending states $\texttt{start}$ and $s_{t^{-1}}$
in order to allow for any number of initial $t^{-1}$ letters.  It is unnecessary
to append a special state accepting the final sequence of powers of $t$;
such sequences will be ``interpreted'' by $\OO_n$ as a sequence
of digits consisting only of zeros and accepted.
Since an accepted string must end with the letter $t$, we designate the
states $s_{j,0}$ for $j \in \{0,1,2\}$ as accept states.
Except for the start state, these are exactly the states with an incoming edge labeled $t$.

\begin{theorem}\label{thm:fsa_shape_1}
The finite state automaton $\OO_n$ accepts exactly the
language of geodesic paths of strict shape $1$ in $BS(1,n)$.
\end{theorem}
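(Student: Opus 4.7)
The plan is to establish both inclusions: every word accepted by the automaton $\OO_n$ is a geodesic of strict shape $1$, and conversely every strict shape $1$ geodesic is accepted. The correspondence rests on identifying the state-path structure of $\OO_n$ with the digit-sequence structure of the $\eta_{u,v,w}$ parametrization, matching trailing zero digits in $\x$ with the trailing $t$-power in $\eta_{u,v,w}(\x)$.

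For the forward direction, I would parse any accepting path in $\OO_n$ as follows. The prepended $\texttt{start}$ and $s_{t^{-1}}$ states produce an initial $t^{-u}$ for some $u \ge 0$. The path then enters the expanded $\DD_n'$ at $s_{0,0}$ and alternates between $a^{\pm 1}$-excursions within a single $\alpha$-digit expansion (reaching $s_{i, x_i}$ from $s_{i,0}$) and $t$-transitions between expansions, terminating at an accept state $s_{j,0}$ via a $t$-edge. This produces $W = t^{-u} a^{x_0} t a^{x_1} t \cdots a^{x_m} t$ with each $|x_i| \le \fn$ and $(x_0, \dots, x_m)$ a legal string of $\DD_n'$. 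Trimming trailing zero digits yields $\x = (x_0, \dots, x_\kx) \in \DD_n$, and $W = \eta_{u, \Sigma(\x), m+1}(\x)$ is of strict shape $1$ since $\kx < m+1$. Because $\x \in \DD_n$ satisfies no non-minimality condition of Lemma~\ref{lemma:odd_box} or Lemma~\ref{lemma:adjacent_digits}, $\x$ is minimal in $\Bvuw$, and by Lemma~\ref{lemma:minimal_is_geodesic} $W$ is a geodesic.

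For the reverse direction, take a strict shape $1$ geodesic $W = \eta_{u,v,w}(\x)$ representing $g = t^{-u} a^v t^w$. By applying Lemma~\ref{lemma:reduce_to_box}, one may assume $\x \in \Bvuw$, since the $\ord$-equivalent representative in $\Bvuw$ has the same length $|W|$. The strict shape $1$ condition $\kx < w \le \max(u,w)$ activates conditions (1) and (2) of the $\Bvuw$ definition, forcing $|x_i| \le \fn$ for every $i$. The geodesicity of $W$ then forces $\x$ to be minimal in $\Bvuw$, whence by Lemmas~\ref{lemma:odd_box} and~\ref{lemma:adjacent_digits} we have $\x \in \DD_n$. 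The accepting trace in $\OO_n$ is now direct: start at $\texttt{start}$, consume $t^{-u}$ via $s_{t^{-1}}$, enter $s_{0,0}$ and, for each $x_i$, walk $|x_i|$ steps into the $i$-th digit expansion before taking a $t$-transition to the next expansion (valid since $\x \in \DD_n$ is a legal $\DD_n'$-path), and finally consume the remaining $t^{w-\kx}$ by reading each surplus $t$ as a zero-digit transition, landing at an accept state $s_{j,0}$.

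The main obstacle is reconciling the digit-bound $|x_i| \le \alpha = \fn$ imposed by the $\alpha$-digit expansion with the range of possible digits a strict shape $1$ geodesic might a priori exhibit. The strict shape $1$ hypothesis is essential: it guarantees $\kx < \max(u,w)$, which activates condition (2) of $\Bvuw$ (bounding $|x_\kx|$ by $\fn$ rather than $\fn+1$), thereby ensuring that $\OO_n$ can indeed trace letter-by-letter the corresponding word. The remaining work is careful bookkeeping between the automaton's state-path description and the digit-sequence-plus-$t$-power description of $\eta_{u,v,w}(\x)$, particularly the clean identification of trailing zero digits in $\x$ with the trailing $t^{w-\kx}$ power.
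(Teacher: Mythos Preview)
Your forward direction is correct, and your two-inclusion structure is exactly the unpacking that the paper's one-line proof (``immediate from Lemma~\ref{lemma:lex_min_reg} and the construction'') gestures at. The reverse direction, however, has a real gap. The step ``by applying Lemma~\ref{lemma:reduce_to_box}, one may assume $\x \in \Bvuw$'' is invalid: replacing $\x$ by some $\y \in \Bvuw$ with $|\eta_{u,v,w}(\y)| = |\eta_{u,v,w}(\x)|$ changes the \emph{word} $W$ itself, so you are no longer proving that the original $W$ is accepted. Concretely, in $BS(1,3)$ take $W = a^2t^2 = \eta_{0,2,2}((2))$. Here $\kx = 0 < 2 = w$, and one checks that $|W| = 4$ is the word length of the element $a^2t^2$, so $W$ is a strict shape~$1$ geodesic under your reading; yet $|x_0| = 2 > \lfloor 3/2\rfloor$, so $\x \notin \Bvuw$, and the $\alpha$-digit expansion in $\OO_3$ (with $\alpha = 1$) cannot read two consecutive $a$'s. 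The automaton rejects $W$. The same phenomenon occurs for $n$ even via the lex-minimality clause: in $BS(1,4)$ the word $a^2ta^2t^2$ is a geodesic of strict shape~$1$, but its digit vector $(2,2)$ fails Lemma~\ref{lemma:adjacent_digits} and is rejected.

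The resolution is interpretive. The paper's ``strict shape~$1$ geodesics'' are the words $\eta_{u,v,w}(\x)$ with $\x \in \Bvuw$ \emph{minimal} and $\kx < w$ --- one canonical geodesic per triple $(u,v,w)$, which is what makes the inequality $|\OO_n(N)| \le |S_n(N)|$ in Corollary~\ref{corollary:shapes_injection} hold --- and not every geodesic word that happens to have that syntactic form. Under that reading your reverse direction should simply begin with such an $\x$ already in $\Bvuw$; the problematic reduction step becomes unnecessary, and the rest of your argument goes through.
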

\begin{proof}
This is an immediate consequence of Lemma~\ref{lemma:lex_min_reg}
together with the observations above deriving the language
of strict shape $1$ geodesics from the language of reduced paths
accepted by $\DD_n$.
\end{proof}

\subsection{Example automata for $n=2$}

To illustrate the derivation of $\OO_n$ from $\DD_n'$, we construct these
automata when $n=2$.  Figure~\ref{fig:fsa_2} illustrates
$\DD_2$ and $\DD'_2$, while Figure~\ref{fig:fsa_2_shape_1} illustrates $\OO_2$, 
the result of performing the digit expansion on each state in $\DD_2'$.

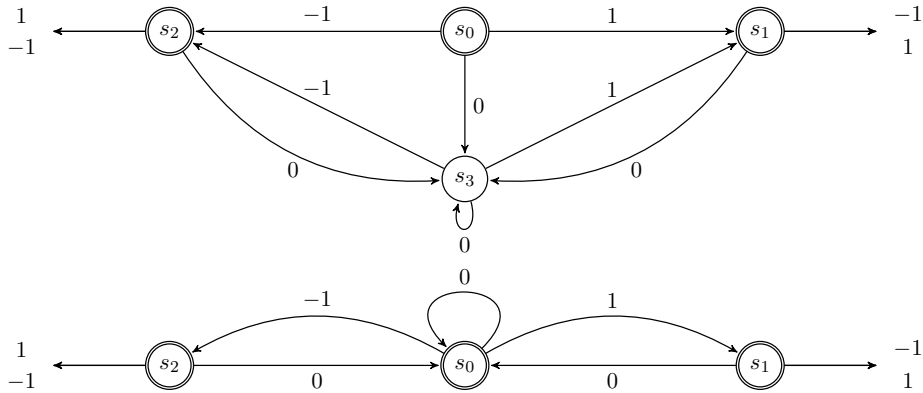
\begin{figure}[ht!]
\tikzset{every state/.style={minimum size=2em}} 
\begin{center}
\resizebox {\columnwidth} {!} {
\begin{tikzpicture}[->,>=stealth',shorten >=1pt,auto, node distance=5cm,
semithick]

\node[state, accepting] (A) {$s_0$};
\node[state, accepting] (B) [right of =A] {$s_1$};
\node[state, draw=none] (BB) [right of=B] {};
\node[state, accepting] (C) [left of = A] {$s_2$};
\node[state, draw=none] (CC) [left of=C] {};
\node[state] (D) [below of = A, yshift=2.5cm] {$s_3$};

\path(A)
edge node[above]{$1$} (B)
edge node[above]{$-1$} (C)
edge node[right]{$0$} (D);

\path(B)
edge [bend left] node[below]{$0$} (D)
edge [shorten >=75pt] node[above] {$-1$} (BB)
edge [shorten >=75pt] node[below] {$1$} (BB);

\path(C)
edge [bend right] node[below] {$0$} (D)
edge [shorten >=75pt] node[above] {$1$} (CC)
edge [shorten >=75pt] node[below] {$-1$} (CC);

\path(D)
edge node[above] {$1$} (B)
edge node[above] {$-1$} (C)
edge [loop below] node[below] {$0$} (D);

\end{tikzpicture}
}

\resizebox {\columnwidth} {!} {
\begin{tikzpicture}[->,>=stealth',shorten >=1pt,auto, node distance=5cm,
semithick]

\node[state, accepting] (A) {$s_0$};
\node[state, accepting] (B) [right of =A] {$s_1$};
\node[state, draw=none] (BB) [right of=B] {};
\node[state, accepting] (C) [left of = A] {$s_2$};
\node[state, draw=none] (CC) [left of=C] {};

\path(A)
edge [loop] node[above] {$0$} (A)
edge [bend left] node[above]{$1$} (B)
edge [bend right] node[above]{$-1$} (C);

\path(B)
edge node[below]{$0$} (A)
edge [shorten >=75pt] node[above] {$-1$} (BB)
edge [shorten >=75pt] node[below] {$1$} (BB);

\path(C)
edge node[below] {$0$} (A)
edge [shorten >=75pt] node[above] {$1$} (CC)
edge [shorten >=75pt] node[below] {$-1$} (CC);

\end{tikzpicture}
}
\end{center}
\caption{The finite state automata $\DD_2$, top, and $\DD_2'$, bottom, for $n=2$.
For both automata, the states $s_0, s_1, s_2$ are accept
states, and $s_0$ is the start state.}
\label{fig:fsa_2}
\end{figure}

\begin{figure}[ht!]
\tikzset{every state/.style={minimum size=2em}} 
\begin{center}
\resizebox {\columnwidth} {!} {
\begin{tikzpicture}[->,>=stealth',shorten >=1pt,auto, node distance=5cm,
semithick]

\node[state,accepting] (A) {$s_{0,0}$};
\node[state] (A1) [above right of =A, yshift=-0.5cm] {$s_{0,1}$};
\node[state] (A_1) [above left of =A, yshift=-0.5cm] {$s_{0,-1}$};

\node[state] (T) [above of =A] {$s_{t^{-1}}$};
\node[state] (start) [above of =T,yshift=-1.5cm] {$\texttt{start}$};

\node[state,accepting] (B) [right of =A] {$s_{1,0}$};
\node[state, draw=none] (BB) [right of=B] {};

\node[state,accepting] (C) [left of = A] {$s_{2,0}$};
\node[state, draw=none] (CC) [left of=C] {};

\path(start)
edge node[left] {$t^{-1}$} (T)
edge [bend left] node[right, pos=0.25] {$t$} (A)
edge [bend left] node[left] {$a$} (A1)
edge [bend right] node[right] {$a^{-1}$} (A_1);

\path(T)
edge [loop below] node[below] {$t^{-1}$} (T)
edge node[above] {$a$} (A1)
edge node[above] {$a^{-1}$} (A_1);

\path(A)
edge [loop below] node[below] {$t$} (A)
edge node[right]{$a$} (A1)
edge node[right]{$a^{-1}$} (A_1);

\path(A1)
edge node[above] {$t$} (B);

\path(A_1)
edge node[above] {$t$} (C);

\path(B)
edge node[above]{$t$} (A)
edge [shorten >=75pt, dashed] node[above] {$a$} (BB)
edge [shorten >=75pt, dashed] node[below] {$a^{-1}$} (BB);

\path(C)
edge node[above] {$t$} (A)
edge [shorten >=75pt, dashed] node[above] {$a$} (CC)
edge [shorten >=75pt,dashed] node[below] {$a^{-1}$} (CC);

\end{tikzpicture}
}
\end{center}
\caption{The finite state automaton $\OO_2$
derived from $\DD_2'$ by expanding each state and prepending
states to allow an initial sequence of $t^{-1}$ letters.
Accept states are indicated with a double circle.  Each accepted string
corresponds to an infinite family of geodesics of strict shape
$1$ in $BS(1,2)$.  We have omitted the expanded states
$s_{1,\pm 1}$ and $s_{2,\pm 1}$ which are unreachable
and have rearranged the states for clarity.}
\label{fig:fsa_2_shape_1}
\end{figure}
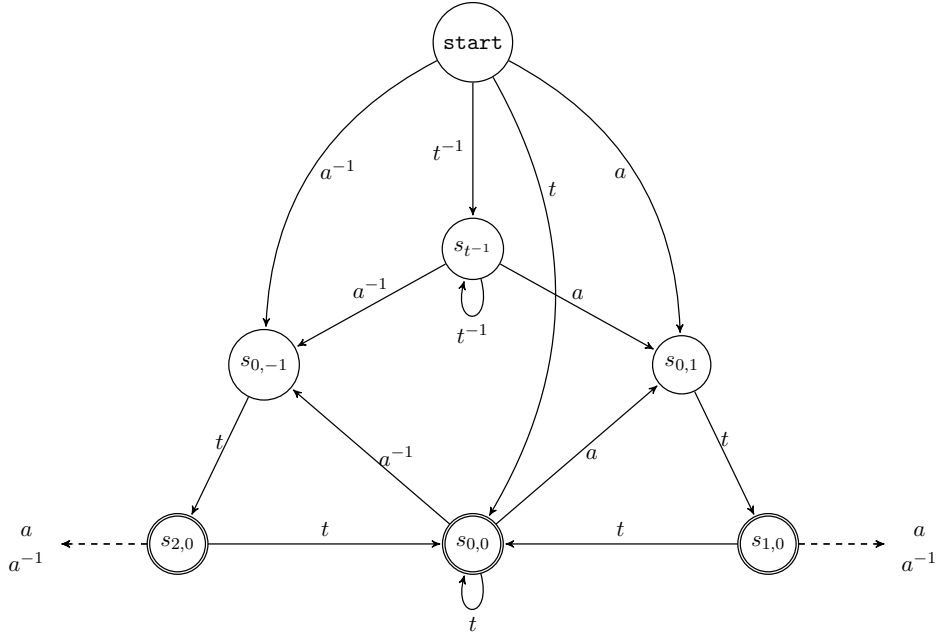

\section{Exponential growth}
\label{section:exp_growth}

In this section we present two lemmas about growth rates which are frequently referenced in Section~\ref{section:bs1n_growth}.
Recall that the growth rate of a sequence $\{f(N)\}_{N=1}^\infty$ is $\lambda$ if and only if
\[
\lim_{N\to\infty} \frac{\log f(N)}{N\log\lambda} = 1.
\]
Equivalently, we write $f(N) = \Theta(\lambda^N)$; that is,
there are constants $A,B>0$ such that $$A \lambda^N \le f(N) \le B \lambda^N$$
for sufficiently large $N$.

\begin{lemma}\label{lemma:exp_growth}
Suppose that $f(N) = \Theta(\lambda^N)$ with $\lambda > 1$.
\begin{enumerate}[itemsep=5pt]
\item Both $f(N+k)$ and $\sum_{i=1}^Nf(i)$ are $\Theta(\lambda^N)$.
\item If $f(N)$ and $g(N)$ are $ \Theta(\lambda^N)$, there are $N_0,d>0$ so that $f(N)/g(N) > d$ for $N > N_0$.
\end{enumerate}
\end{lemma}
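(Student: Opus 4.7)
The plan is to unpack the definition of $\Theta(\lambda^N)$ as explicit two-sided geometric bounds and then manipulate them algebraically. Concretely, I fix constants $A,B>0$ and an index $N_0$ so that $A\lambda^N \le f(N) \le B\lambda^N$ for all $N \ge N_0$, and similarly for $g$ in part (2), and then verify each claim.

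For part (1), the shift statement is immediate: for $N \ge N_0$, applying the bounds at index $N+k$ gives $A\lambda^k \cdot \lambda^N \le f(N+k) \le B\lambda^k \cdot \lambda^N$, so $f(N+k) = \Theta(\lambda^N)$ with constants $A\lambda^k$ and $B\lambda^k$ (if $k$ is negative, one restricts to $N \ge N_0 - k$). For the partial sum $S(N) = \sum_{i=1}^N f(i)$, the upper bound is the more delicate of the two, but it still reduces to a geometric series: write $S(N) = \sum_{i=1}^{N_0-1} f(i) + \sum_{i=N_0}^N f(i)$, bound the first (finite) piece by a constant $C$, and bound the second piece by $B \sum_{i=N_0}^N \lambda^i \le \frac{B\lambda}{\lambda-1}\lambda^N$. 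Since $\lambda > 1$, the constant $C$ is absorbed by enlarging the leading coefficient for $N$ sufficiently large. For the lower bound one uses only a single term: $S(N) \ge f(N) \ge A\lambda^N$ for $N \ge N_0$.

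For part (2), choose $A_f, N_f$ with $f(N) \ge A_f \lambda^N$ for $N \ge N_f$, and $B_g, N_g$ with $g(N) \le B_g \lambda^N$ for $N \ge N_g$. Then for $N \ge \max(N_f, N_g)$ the $\lambda^N$ factors cancel, yielding $f(N)/g(N) \ge A_f / B_g$, so we may take $d = A_f/B_g$ and $N_0 = \max(N_f, N_g)$.

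Honestly, there is no real obstacle here; the whole lemma is a bookkeeping exercise in the $\Theta$-notation, and the only minor subtlety is handling the finitely many indices below $N_0$ when bounding $\sum_{i=1}^N f(i)$ from above, which is resolved by absorbing a bounded additive correction into the multiplicative constant using $\lambda > 1$.
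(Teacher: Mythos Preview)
Your argument is correct and follows essentially the same route as the paper: both proofs unpack the $\Theta$ definition into explicit two-sided bounds, handle the shift $f(N+k)$ trivially, control the partial sum by separating off the finitely many initial terms and bounding the tail by a geometric series, and obtain the ratio bound in part~(2) by cancelling $\lambda^N$. The only cosmetic difference is that for the lower bound on $\sum_{i=1}^N f(i)$ you keep a single term $f(N)\ge A\lambda^N$, whereas the paper keeps the full geometric lower sum; both are equally valid.
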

\begin{proof}
It is clear that $f(N+k) = \Theta(\lambda^{N+k}) = \Theta(\lambda^N)$.  We now
show that $\sum_{i=1}^Nf(i)=\Theta(\lambda^N)$.  As discussed above, there are
$C_1,C_2,M>0$ so that for all $N>M$ we have
\[
C_1 \lambda^N \le f(N) \le C_2 \lambda^N.
\]
Let $D= \sum_{i=1}^{M}f(i)$; note that $D$ is  constant.  The inequalities
\[
C_1 \sum_{i=M+1}^N \lambda^i + D \le \sum_{i=1}^N f(i) \le C_2  \sum_{i=M+1}^N \lambda^i + D,
\]
combined with the expansion $\sum_{i=1}^N\lambda^i = \lambda(\lambda^{N}-1)/(\lambda-1) = \Theta(\lambda^N)$, yield additional
constants $C_3,C_4>0$ so that
\[
C_3 \lambda^N \le C_1 \sum_{i=M+1}^N \lambda^i + D \le \sum_{i=1}^N f(i) \le C_2  \sum_{i=M+1}^N \lambda^i + D \le C_4 \lambda^N
\]
for sufficiently large $N$.
Thus $\sum_{i=1}^Nf(i)=\Theta(\lambda^N)$, as desired.  

To prove
the second statement in the lemma, observe that for sufficiently
large $N$ and new constants $C_i>0$ we have
\[
C_1 \lambda^N \le f(N) \le C_2 \lambda^N \text{ and }
C_3 \lambda^N \le g(N) \le C_4 \lambda^N,
\]
hence $f(N)/g(N) > C_1/C_4 > 0$.
\end{proof}

We will be interested in determining the growth rate of the
function which counts the number of accepted paths of a given length
in a finite state automaton.

\begin{lemma}\label{lemma:fsa_growth}
Let $F$ be a finite state automaton with state set $S$.
Let $f(N)$ denote the number of accepted paths in $F$ of length $N$,
and for each $s \in S$, let $f_s(N)$ denote the number of accepted
paths in $F$ beginning at state $s$.
Let $F_1, \dots, F_c$ be the strongly connected components of $F$.
\begin{enumerate}
    \item For each $i$, the growth rate of $\{f_s(N)\}_{N \in \N}$ is constant over all $s \in F_i$.
    \item The growth rate of $\{f(N)\}_{N \in \N}$ is the maximum of the growth rates of the $F_i$.
\end{enumerate}
\end{lemma}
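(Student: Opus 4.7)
I would prove the two parts in order, invoking Lemma~\ref{lemma:exp_growth} throughout. First, I would reduce to the case where $F$ is \emph{trim}: every state is both reachable from the start state and able to reach some accept state. Deleting non-trim states alters neither the accepted language nor $f$ and $f_s$ for surviving $s$, and trimness will be essential for both the lower bound in part (2) and the in-component walk count below.

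For part (1), fix a strongly connected component $F_i$ and states $s, s' \in F_i$. Strong connectivity produces directed walks $s \to s'$ of some length $d$ and $s' \to s$ of some length $d'$ in $F$. Prepending a fixed $s$-to-$s'$ walk to any accepted path of length $N$ starting at $s'$ yields a distinct accepted path of length $N + d$ starting at $s$, so $f_s(N + d) \geq f_{s'}(N)$, and by symmetry $f_{s'}(N + d') \geq f_s(N)$. Since index shifts preserve exponential growth by Lemma~\ref{lemma:exp_growth}(1), $f_s$ and $f_{s'}$ have equal growth rate; denote this common value $\lambda_i$.

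For part (2), set $\lambda := \max_i \lambda_i$. The lower bound $f(N) \geq \Omega(\lambda^N)$ is immediate: choose $i^{*}$ with $\lambda_{i^{*}} = \lambda$, any $s^{*} \in F_{i^{*}}$, and (by trimness) a walk from the start state $s_0$ to $s^{*}$ of some length $d$. Prepending it injects accepted paths of length $N - d$ from $s^{*}$ into accepted paths of length $N$ from $s_0$, so $f(N) \geq f_{s^{*}}(N - d) = \Theta(\lambda^N)$. For the upper bound, I would decompose each accepted path by its trace through the DAG quotient on strongly connected components: the path visits an ordered sequence of at most $c$ components $F_{j_0}, \ldots, F_{j_r}$, uses $r$ inter-component transition edges, and spends $L_m \geq 0$ steps within $F_{j_m}$ with $\sum_m L_m = N - r$. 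For a fixed choice of component sequence and inter-component edges, the count is a product $\prod_m g_{j_m}(L_m)$, where $g_j(L)$ denotes the number of $L$-step walks confined to $F_j$ starting at a prescribed state. Granting $g_j(L) = O(\lambda^L)$ and summing over the finitely many such choices and over the $O(N^c)$ compositions of $N - r$ yields $f(N) = O(N^c \lambda^N)$, which has growth rate at most $\lambda$.

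The remaining step, and the main obstacle, is the in-component bound $g_j(L) = O(\lambda^L)$: note that $\lambda_j$ was defined via accepted paths starting in $F_j$ that may subsequently leave the component, not via walks confined to $F_j$, so a priori $g_j$ could grow faster than $\lambda_j^L$. Trimness again intervenes: every $s' \in F_j$ admits a fixed walk to an accept state of length at most some constant $D$, so concatenating each $L$-step in-component walk with the suffix determined by its endpoint injects $L$-step walks from a fixed $s \in F_j$ confined to $F_j$ into accepted paths from $s$ of length in $[L, L + D]$. Distinct in-component walks produce distinct accepted paths since they differ within the first $L$ edges, whence $g_j(L) \leq \sum_{k=L}^{L+D} f_s(k) = \Theta(\lambda_j^L) = O(\lambda^L)$ by part (1) and Lemma~\ref{lemma:exp_growth}(1). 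Combining the two bounds shows $f(N) = \Theta(\lambda^N)$, completing the proof.
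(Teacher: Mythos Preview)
Your proof of part~(1) is essentially identical to the paper's: both prepend a fixed walk between two states in the same component and invoke Lemma~\ref{lemma:exp_growth} to conclude the growth rates agree.

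For part~(2), however, you take a genuinely different and considerably longer route. The paper argues directly from the one-step recursion
\[
f_s(N) = \sum_{s \mapsto s'} f_{s'}(N-1),
\]
which immediately gives $\lambda_s = \max_{s \mapsto s'} \lambda_{s'}$, and then iterates to conclude that $\lambda_{s_0}$ equals the maximum $\lambda_{\bar s}$ over all reachable $\bar s$; combined with part~(1) this is the maximum over the components. No trimming, no DAG decomposition, and no in-component walk bound are needed. Your approach instead decomposes each accepted path along the condensation DAG, bounds the number of walks confined to each component by reinjecting them into accepted paths via a bounded suffix, and sums over compositions. This is correct and is the standard transfer-matrix style argument, and it has the minor advantage of making the polynomial correction $O(N^c\lambda^N)$ explicit. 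But it is substantially more work than the paper's two-line recursion, and the trimming preprocessing and the auxiliary quantity $g_j(L)$ are machinery the paper avoids entirely.
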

\begin{proof}
Let $\lambda$ be the growth rate of the sequence $\{f(N)\}_{N \in \N}$, and $\lambda_s$ the 
growth rate of the sequence $\{f_s(N)\}_{N \in \N}$ for any $s \in S$.
Let $s, s' \in S$ be states.  
If there is an edge from $s$ to $s'$, 
then $f_s(N+1) \ge f_{s'}(N)$.  
It follows from Lemma~\ref{lemma:exp_growth} that $\lambda_s \ge \lambda_{s'}$.
Iterating this argument shows
that if there is a path of any length from $s$ to $s'$, then
$\lambda_s \ge \lambda_{s'}$ and hence $\lambda_s$ is constant
over a strongly connected component in $F$.

Next observe that if $s$ is any state in $F$, then
$f_s(N) = \sum_{s\mapsto s'} f_{s'}(N-1)$, where $s \mapsto s'$
denotes the existence of an edge from $s$ to $s'$.  Therefore,
$\lambda_s = \max_{s\mapsto s'}\lambda_{s'}$.  Iterating
this argument shows that $\lambda_s$ is the maximum $\lambda_{\bar{s}}$
over all $\bar{s}$ which are reachable from $s$.  Applying this
argument to the start state proves the second statement of the lemma.
\end{proof}

\section{The growth rate of $BS(1,n)$}
\label{section:bs1n_growth}

Let $S_n(N)$ denote the sphere
of radius $N$ in $BS(1,n)$.  The growth rate of a finitely generated group $G$
is defined to be the growth rate of the sequence $\{|S_n(N)|\}_{n \in \N}$.
In this section we compute the growth rate of $BS(1,n)$ for all $n>1$ using the finite state automaton constructed in 
Section~\ref{section:regular_languages} which accepts $\OO_n$, the language of
geodesic paths of strict shape $1$.  

In order to obtain bounds
on the number of minimal vectors producing geodesic paths of any shape, we construct a map from the set of all minimal
vectors to the set of minimal vectors corresponding to geodesic paths of strict shape $1$. 
The difficulty is that there are certain digits allowed in a minimal vector $\x$ which are not permitted as
exponents in a geodesic path of strict shape 1. Specifically, when
$\kx \geq \max(u,w)$, the final digit of $\x$ is allowed to have absolute value greater than $\fn$, but a minimal vector corresponding to a geodesic
of strict shape 1 must satisfy $\kx < \max(u,w)$ and have $|x_i| \leq \fne$ for all $i\leq \kx < \max(u,w)$. 

The idea of our map is not complicated:
given $\x\in\Bvuw$, we assume that $\kx < \max(u,w)$, possibly
reducing $\x$  to remove a final
digit whose absolute value exceeds $\fn$, 
and modify $u$ and/or $w$ to $u'$ and/or $w'$, ensuring that $\kx < \max(u',v')$.

We now define a map $c:\LL_v \to \LL_v$ which implements the algorithm described above.  The subsequent modification of $u$ and/or $w$ is a secondary step.
Take $\x \in \Bvuw$.  
If it exists, let $j\le \kx$ be the minimal index such that $|x_i| \ge \fne$ for $j\le i \le \kx$.

It is easy to verify that for $j\le i < \kx$, the digits $x_i$ have constant sign. Suppose $(x_i,x_{i+1}) = (\delta\fne,-\delta\fne)$.
Let $\y = \x + \delta\wi$.  Then $|\kx-\ky| \leq 1$ which implies that we can use the same length formula from Lemma~\ref{lemma:length_formula} to determine whether 
$|\eta_{u',v',w'}(\y)| < |\eta_{u,v,w}(\x)|$.
It is easy to verify that $\Vert \y \Vert_1 < \Vert \x \Vert_1$, and thus, regardless of which length formula is required, $|\eta_{u',v',w'}(\y)| < |\eta_{u,v,w}(\x)|$.
Thus $\x$ can be reduced, contradicting the fact that it is minimal.

Note that the digits of $c(\x)$ and $\x$ are identical for indices less than $j$.

When $n$ is odd it follows from by Lemma~\ref{lemma:odd_box} that the only possibility we must consider is $j = \kx$ with $x_{\kx} = \delta(\fn +1)$, where $\delta \in \{\pm 1\}$.
When $n>2$ is even, $(x_j, \cdots ,x_{\kx})$ is a maximal
sequence where all but the final digit is $\delta\fne$, and the final digit might be $\delta(\fne+1)$.
When $n=2$, the final digit is chosen from the set $\{\delta,\delta 2,\delta 3\}$.
Note that this sequence of digits is likely quite short as the number of repetitions of the digit $\fn$ is always limited, as shown in \cite{TW}.

The map $c$ requires an alternate definition when $n=2$ to account for the differing bounds on the final digit of a vector in $\Bvuw$.
Let ``condition (A)" denote the case $n=2$ and either
\begin{itemize}[itemsep=5pt]
    \item $x_\kx = \delta 3$ or
    \item $x_\kx=\delta 2$ and $j < \kx$.
\end{itemize}
If condition (A) holds, define
\[
c(\x) = \x + \delta\sum_{i=j}^{\kx-1} \wi + \delta 2\wf{\kx} + \delta \wf{\kx+1}.
\]
Otherwise, if $x_{\kx} = \delta(\fn+1)$ or $j < \kx$, that is, the last digit
has absolute value greater than $\fn$ or the sequence has length at least 2, define
\[
c(\x) = \x + \delta\sum_{i=j}^{\kx} \wi.
\]
When $j$ is undefined or there is a single $\delta\fne$ at the end of $\x$,
define $c(\x) = \x$.

When $n$ is odd, $c(\x)$ replaces a final digit in $\x$ of $\delta(\fn+1)$
with the final digits $(-\delta(\fn-1),\delta)$.
When $n$ is even, the behavior of $c(\x)$ depends on the configuration
of the final digits of $\x$.  This behavior is straightforward, but we will
need to refer to this computation later, so we explain it in detail.
The simplest case is that there is a sequence of at least two $\delta\fne$ digits
at the end of $\x$.  We compare the digits of $\x$ and $c(\x)$.
\begin{equation}\label{equation:cx_digits}
\begin{array}{cccccrcrcr}
    \x    & =& (x_0, & \cdots, & x_{j-1}, &  \delta\fne, & \delta\fne, & \cdots, & \delta\fne) \\
    c(\x) & =& (x_0, & \cdots, & x_{j-1}, & -\delta\fne, & -\delta(\fne-1),& \cdots,& -\delta(\fne-1),& 1)
\end{array}
\end{equation}
We now describe the slight variations in the other cases.
\renewcommand{\labelitemii}{$\circ$}
\begin{itemize}[itemsep=5pt]
    \item When subsequence has length at least $2$ and the final digit of $\x$ is $\delta(\fne+1)$,
    then the penultimate digit of $c(\x)$ is $-\delta(\fne-2)$ and all other digits are as in Equation~\eqref{equation:cx_digits}.
    \item When there are no $\delta\fne$ digits, a final digit of $\delta(\fne+1)$ is replaced by the sequence of digits $(-\delta(\fne-1),1)$.
    \item When $n=2$, we use the symbol $|$ to mark a location in the vector so that the change in digits is clearly depicted.
    When $\x$ ends with the maximal subsequence:
    \begin{itemize}[itemsep=5pt]
        \item $(\delta,\delta,\dots,\delta,|\delta 3)$, it is replaced with
    $(-\delta,0,\dots,0,|0,0,\delta)$, where the $\delta 3$ is replaced by $(0,0,\delta)$.
        \item $(\delta 3)$, it is replaced with $(-\delta,0,\delta)$.
        \item $(\delta,\delta,\dots,\delta,|\delta 2)$, it is replaced with
    $(-\delta,0,\dots,0,|-\delta,0,\delta)$.
        \item $(\delta 2)$, it is replaced with $(0,\delta)$. 
    \end{itemize}
\end{itemize}

A computation shows that when condition (A) holds
we have $k_{c(\x)} = \kx + 2$.  
In all other cases, we have $k_{c(\x)} = \kx+1$.
The change in $\ell^1$ norm depends on the length $\kx-j+1$ of the digit sequence $(x_j, \cdots ,x_{\kx})$ and the
value of the final digit.  Specifically, we observe that
\begin{equation}\label{equation:cx_norm_change}
\Vert c(\x) \Vert_1 = \left\{
\begin{array}{ll}
\Vert \x \Vert_1               & \textnormal{if $n$ is odd or $c(\x) = \x$} \\
\Vert \x \Vert_1 - (\kx - j-1) & \textnormal{if $|x_\kx| = \fne$} \\
\Vert \x \Vert_1 - (\kx - j-1) & \textnormal{if $n=2$ with $|x_{\kx}|=2$ and $j<\kx$} \\
\Vert \x \Vert_1 - (\kx - j+1) & \textnormal{otherwise.}
\end{array}\right.
\end{equation}
\begin{lemma}\label{lemma:cx_at_most_x}
Let $\x \in \Bvuw$ and define the map $c$ as above.  Then
 $\Vert c(\x) \Vert_1 \le \Vert \x \Vert_1$.
\end{lemma}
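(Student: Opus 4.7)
The plan is to prove the lemma by directly verifying Equation~\eqref{equation:cx_norm_change} case by case, and then observing that the coefficients $(\kx - j - 1)$ and $(\kx - j + 1)$ appearing in that equation are nonnegative under the stated hypotheses. Since $c(\x)$ is defined piecewise on $\x$, the strategy is to compute $\Vert c(\x)\Vert_1 - \Vert \x\Vert_1$ directly in each branch of the definition, using the explicit digit-by-digit substitutions listed just before the lemma.

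When $j$ is undefined or $\x$ ends in a single $\delta\fne$, we have $c(\x) = \x$ and the inequality is trivial. For $n$ odd, Lemma~\ref{lemma:odd_box} forces $j = \kx$ and $x_\kx = \delta(\fn+1)$; applying $c(\x) = \x + \delta \wf{\kx}$ replaces the final digit by a pair whose $\ell^1$-norm is $\fn+1$, matching the norm of the digit it replaces. Thus $\Vert c(\x)\Vert_1 = \Vert \x\Vert_1$ in this case.

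For even $n>2$, I would first handle the generic case in which every digit in the run $(x_j,\ldots,x_\kx)$ equals $\delta\fne$. The substitution recorded in Equation~\eqref{equation:cx_digits} replaces a block of length $L = \kx - j + 1$ with a block of length $L+1$ whose $\ell^1$-norm is $\fne + (L-1)(\fne-1) + 1$; using $\fne = n/2$ and subtracting the original contribution $L\fne$ gives a net change of $-(L-2) = -(\kx - j - 1)$. When the final digit of the run is instead $\delta(\fne+1)$, the penultimate digit of $c(\x)$ shifts from $-\delta(\fne-1)$ to $-\delta(\fne-2)$, producing an additional reduction of $2$ in the change, so the net change becomes $-(\kx - j + 1)$; the degenerate case $j = \kx$ with $x_\kx = \delta(\fne+1)$ is handled analogously and produces the same $-(\kx - j + 1)$.

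Finally, the $n=2$ case is handled by walking through each of the four bulleted subsequences above the lemma; in each one the substitution is explicitly described, and an arithmetic check confirms the corresponding entry of Equation~\eqref{equation:cx_norm_change}. To finish, I observe that every appearing coefficient is nonnegative: the branches labeled ``$|x_\kx|=\fne$'' and ``$n=2$ with $|x_\kx|=2$ and $j<\kx$'' explicitly require $j<\kx$, so $\kx - j - 1 \ge 0$, while the ``otherwise'' branch has $\kx - j + 1 \ge 1$ automatically. The main obstacle is simply organizing the $n=2$ subcases, since several distinct final-digit configurations must be verified separately, but no single step involves more than routine arithmetic on the explicit substitutions.
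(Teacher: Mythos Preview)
Your proposal is correct and follows essentially the same approach as the paper: both arguments reduce the lemma to Equation~\eqref{equation:cx_norm_change} and then check that the subtracted quantities $(\kx-j-1)$ and $(\kx-j+1)$ are nonnegative in the relevant branches. The paper simply states Equation~\eqref{equation:cx_norm_change} without justification and gives a two-line check of the signs, whereas you additionally verify the equation itself via the explicit digit substitutions; this extra work is sound and your case analysis matches the paper's description of $c$.
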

\begin{proof}
When $c(\x) = \x$ or $\kx-j \ge 1$, the lemma is immediate.  
If $\kx =j$, it follows from the definition of $c(\x)$ that $|x_\kx| > \fne$, so we are in the final
case of Equation~\ref{equation:cx_norm_change} above.  Here $\kx -j + 1 > 0$, so
the lemma follows.
\end{proof}

In order to compare geodesic length before and after our alteration of the vector $\x$, define
\[
\beta(\x) = 3 +  \Vert \x \Vert_1 - \Vert c(\x) \Vert_1.
\]
It follows immediately from Lemma~\ref{lemma:cx_at_most_x} that $\beta(\x) \ge 3$.

Define
\[
\Phi(u,w,\x) = \left\{\begin{array}{ll}
(u,w+\beta(\x),c(\x)) & \textnormal{if $\eta_{u,v,w}(\x)$ has shape $1$} \\
(w,u+\beta(\x),c(\x)) & \textnormal{if $\eta_{u,v,w}(\x)$ has shape $2$} \\
(u,2\kx-w+\beta(\x),c(\x)) & \textnormal{if $\eta_{u,v,w}(\x)$ has shape $3$} \\
(w,2\kx-u+\beta(\x),c(\x)) & \textnormal{if $\eta_{u,v,w}(\x)$ has shape $4$} 
\end{array}
\right.
\]

\begin{lemma}\label{lemma:shape_map}
Let $\x \in \Bvuw$.  If $(u',w',c(\x)) = \Phi(u,w,\x)$, then 
\[
|\eta_{u',v',w'}(c(\x))| = |\eta_{u,v,w}(\x)|+3,
\]
where $v'$ is determined by $c(\x)$
and $\eta_{u',v',w'}(c(\x))$ has strict shape $1$. Moreover, if
$\x$ is minimal, then $c(\x) \in \BB_{v'}^{u',w'}$ is also minimal.
\end{lemma}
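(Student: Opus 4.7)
The proof addresses the three assertions of the lemma---the length identity, the strict shape 1 claim, and the minimality claim---using case analysis on the four shapes in the definition of $\Phi$.

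First I would verify the length identity. It is algebraic: substituting $\beta(\x) = 3 + \Vert \x \Vert_1 - \Vert c(\x) \Vert_1$ into the definition of $\Phi$ and applying Lemma~\ref{lemma:length_formula}, the $\ell^1$ norms cancel and only the constant $+3$ remains. In shape 1, $u' = u$ and $w' = w + \beta(\x)$; once the strict shape 1 condition $k_{c(\x)} < w'$ is in hand, the first length formula of Lemma~\ref{lemma:length_formula} applies to both sides and the identity is immediate. Shape 2 is symmetric, and in shapes 3 and 4 the input uses the second length formula $\Vert \x \Vert_1 + 2\kx - |u-w|$; the explicit $2\kx$ baked into $\Phi$ is exactly what is needed to absorb this term, producing $+3$ again.

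The strict shape 1 condition amounts to $k_{c(\x)} < w'$. Since $k_{c(\x)} \le \kx + 2$ (from the description of $c$) and $\beta(\x) \ge 3$ by Lemma~\ref{lemma:cx_at_most_x}, each case is a short inequality: in shape 1, for example, $k_{c(\x)} \le \kx + 2 \le w + 2 < w + \beta(\x) = w'$, and shapes 2--4 are analogous using the defining inequalities among $u$, $w$, $\kx$ together with the $2\kx$ in $w'$. Membership $c(\x) \in \BB_v^{u',w'}$ then follows by inspection, since every new digit introduced by $c$ has absolute value at most $\fne$---the stricter bound that applies because $k_{c(\x)} < \max(u',w')$---as one reads from Equation~\eqref{equation:cx_digits} and the $n=2$ bullet list.

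The main obstacle is the minimality assertion. For odd $n$ it is essentially free: strict shape 1 forces $k_{c(\x)} < \max(u',w')$, so Lemma~\ref{lemma:odd_box}(1) makes $\BB_v^{u',w'}$ a singleton and $c(\x)$ is vacuously minimal. For even $n$, Lemma~\ref{lemma:adjacent_digits} characterizes minimality in this regime by the absence of the two forbidden adjacent patterns, and I would verify this by enumerating every adjacent pair appearing in $c(\x)$. Pairs inside the new tail, read off from Equation~\eqref{equation:cx_digits} and the $n=2$ bullet list, either start with a digit of absolute value strictly less than $\fne$ or exhibit matching signs with a non-$\fne$ second entry, avoiding both forbidden patterns. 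The interface pair $(x_{j-1}, -\delta\fne)$ at positions $(j-1, j)$ is safe because the minimality of $j$ forces $|x_{j-1}| < \fne$. Pairs strictly below position $j-1$ are inherited from $\x$ and are free of forbidden patterns by the minimality of $\x$ together with the analogous local characterization of minimality in other shapes from~\cite{TW}. Combining these checks shows $c(\x)$ realizes no forbidden pattern and is therefore minimal.
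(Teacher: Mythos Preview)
Your treatment of the length identity and the strict shape~1 verification matches the paper's proof essentially line for line, and your odd~$n$ minimality argument via Lemma~\ref{lemma:odd_box} is exactly what the paper does.

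The difference is in the even~$n$ minimality argument, and here there is a genuine gap. You handle the tail and the interface pair $(x_{j-1},-\delta\tfrac{n}{2})$ correctly, but for the inherited pairs at positions $\le j-1$ you write that they ``are free of forbidden patterns by the minimality of $\x$ together with the analogous local characterization of minimality in other shapes from~\cite{TW}.'' The problem is that Lemma~\ref{lemma:adjacent_digits} is stated and proved only under the hypothesis $k_\x < \max(u,w)$; the paper explicitly says that it will suffice to characterize minimality for this subset, and no general-shape analogue is cited or established. Since $\x$ may have shape~2, 3, or~4 (or shape~1 with $k_\x = w$), you cannot invoke Lemma~\ref{lemma:adjacent_digits} on $\x$ itself, and the appeal to an unstated result in~\cite{TW} is not justified by what is available here.

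The paper closes this gap directly: assuming $c(\x)$ contains a forbidden pair, it locates that pair in the unchanged portion of $\x$, then explicitly builds $\y = \x + \delta(\wf{i}+\wf{i+1})$ (respectively $\y = \x + \delta\wf{i}$) and checks by hand that $\Vert \y \Vert_1 \le \Vert \x \Vert_1$ with a strict or lexicographic decrease. Because $|k_\x - k_\y| \le 1$, the same length formula from Lemma~\ref{lemma:length_formula} applies to both, and the contradiction with minimality of $\x$ follows without any shape-specific characterization. Your argument would be complete if you replaced the citation with this short explicit reduction.
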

\begin{proof}
We begin with the assumption that $\eta_{u',v',w'}(c(\x))$ has strict shape $1$ and first prove that $|\eta_{u',v',w'}(c(\x))| - |\eta_{u,v,w}(\x)|=3$.

Note that if $\eta$ is a geodesic of strict shape 1, we use the first length formula in Lemma~\ref{lemma:length_formula} to compute its length.
It follows that
$|\eta_{u',v',w'}(c(\x))| = \Vert \x' \Vert_1 + u' + w'$.
When $\eta_{u,v,w}(\x)$ has shape 1 or 2, we again use the first length
formula in Lemma~\ref{lemma:length_formula} to compute its length.
Recalling the definition of $\beta$, we compute
\begin{align*}
|\eta_{u',v',w'}(c(\x))| &= \Vert c(\x) \Vert_1 + u' + w' \\
                       &= \Vert c(\x) \Vert_1 + u + w + \beta(\x) \\
                       &= \Vert \x \Vert_1 + u + w + 3 \\
                       &= |\eta_{u,v,w}(\x)|+3.
\end{align*}
When $\eta_{u,v,w}(\x)$ has shape 3 we must have $u  < \kx$, and we use the second formula in Lemma~\ref{lemma:length_formula} to compute $|\eta_{u,v,w}(\x)| = \Vert \x \Vert_1 + 2\kx -|u-w|$.
It follows that
\begin{align*}
|\eta_{u',v',w'}(c(\x))| &= \Vert c(\x) \Vert_1 + u' + w' \\
    &= \Vert c(\x) \Vert_1 + u + 2\kx - w + \beta(\x) \\
      &= \Vert \x \Vert_1 +3 + 2\kx - |u-w| \\
      &= |\eta_{u,v,w}(\x)|+3.
\end{align*}
An analogous computation yields the same conclusion when $\eta_{u,v,w}(\x)$ has shape 4.
Thus it remains to
show that $\eta_{u',v',w'}(c(\x))$ has strict shape $1$ and $c(\x) \in \BB_{v'}^{u',w'}$
is minimal, that is, $\eta_{u',v',w'}(c(\x))$ is a geodesic.

We begin by proving that $\eta_{u',v',w'}(c(\x))$ has strict shape $1$.
Suppose that $\eta_{u,v,w}(\x)$ has shape 1 or 2.  From the definition of $c(\x)$ we know that $k_{c(\x)} \leq \kx +2$.  If $\eta_{u,v,w}(\x)$ has shape 1, then $\kx \leq w$ and hence
\[
k_{c(\x)} \leq \kx +2 < \kx + 3 \leq w+3 \leq w + \beta(\x) = w'.
\]
That is, $c(\x)$ produces a geodesic of strict shape 1. An analogous argument holds when $\eta_{u,v,w}(\x)$ has shape 2.

When $\eta_{u,v,w}(\x)$ has shape $3$ we have $w < \kx$,
so $2\kx - w > \kx$.
To verify that $\eta_{u',v',w'}(c(\x))$ has strict shape 1
we must check that $k_{c(\x)} < w'$.
Since $\beta(\x) \ge 3$ and $k_{c(\x)} \leq \kx +2$, it follows that $k_{c(\x)} < \kx + \beta(\x)$ and
\[
k_{c(\x)} < \kx + \beta(\x) < 2\kx -w + \beta(\x) = w'.
\]
A similar relation holds when $\eta_{u,v,w}(\x)$ has shape $4$.

By construction,
$c(\x)$ satisfies the digit bounds on $\BB_{v'}^{u',w'}$.
It remains to show that $c(\x) \in \BB_{v'}^{u',w'}$ is minimal in all cases.
When $n$ is odd, the fact that $k_{c(\x)} < \max(u',w')$ together with Lemma~\ref{lemma:odd_box}
imply that $c(\x)$ is minimal.  

Next let $n$ be even.
Note that for $i<j$ we have $|c(\x)_{i}| = |x_{i}|$; additionally we have $|c(\x)_{j-1}| = |x_{j-1}| < \fne$. 
Suppose towards a contradiction that $c(\x)$ is not minimal.
It follows from  Lemma~\ref{lemma:adjacent_digits} that
$c(\x)$ contains a digit subsequence $(c(\x)_{i},c(\x)_{i+1}) = (\delta\fne,\delta\fne)$ or $(c(\x)_{i},c(\x)_{i+1})=(\delta\fne,c(\x)_{i+1})$
where $\sgn(c(\x)_{i+1}) = -\sgn(\delta)$.  

Suppose $c(\x)$ contains the digit subsequence $(c(\x)_i,c(\x)_{i+1}) = (\delta\fne,\delta\fne)$.  
The definition of $c(\x)$ precludes any digit $|c(\x)_m| \ge \fne$
for any $m$ with $j+1 \le m < k_{c(\x)}$ and  $|c(\x)_{j-1}| < \fne$.  Therefore, the indices
of both digits in the subsequence must be strictly less than $j-1$.  
Thus $(c(\x)_i,c(\x)_{i+1})= (x_i,x_{i+1})$ is contained in $\x$.
Including the subsequent digit, we have $(x_i,x_{i+1},x_{i+2}) = (\delta\fne,\delta\fne,x_{i+2})$, and $|x_{i+2}| < \fne$.

Let $\y = \x + \delta(\wf{i-2}+\wf{i-1})$.
It is easily verified that $\Vert \y \Vert_1 \leq \Vert \x \Vert_1$, as $(x_i,x_{i+1},x_{i+2}) = (\delta\fne,\delta\fne,x_{i+2})$ and $(y_i,y_{i+1},y_{i+2}) = (-\delta\fne,-\delta(\fne-1),x_{i+2}+\delta)$.
If there is equality between the two $l^{1}$ norms, note that the change from $x_{i+1}$ to $y_{i+1}$ is a lexicographic decrease.

Suppose $c(\x)$ contains the digit subsequence $(c(\x)_i,c(\x)_{i+1}) = (\delta\fne,c(\x)_s)$
where $\sgn(c(\x)_s) = -\sgn(\delta)$.  
It follows that $i+1 \leq j-1$ and thus $(c(\x)_i,c(\x)_{i+1}) = (x_i,x_{i+1})$.
Let $\y = \x + \wi$.  It is easily verified that $\Vert \y \Vert_1 < \Vert \x \Vert_1$.

In both cases, as $|\kx-\ky| \leq 1$ we can use the same length formula from Lemma~\ref{lemma:length_formula} to determine whether $|\eta_{u',v',w'}(\y)| < |\eta_{u,v,w}(\x)|$.
It follows from the comparison of $\Vert \y \Vert_1$ and $\Vert \x \Vert_1$ that $|\eta_{u',v',w'}(\y)| < |\eta_{u,v,w}(\x)|$., contradicting the fact that $\x$ is minimal.  
Thus it must be the case that $c(\x) \in \BB_{v'}^{u',w'}$ is minimal.
\end{proof}

The next lemma allows us to compute the degree of the map $\Phi$, which will be crucial to the proof of Corollary~\ref{corollary:shapes_injection}, where we show that the growth rates of the sequences $\OOn$ and $\{|S_n(N)|\}_{n \in \N}$ are identical.  
Let $\OO_n(N)$ denote the set of elements of the language $\OO_n$
which are of length $N$.

\begin{lemma}\label{lemma:clamp_degree}
For any minimal vector $\y \in \BB_{v'}^{u',w'}$
the maximal number of minimal vectors $\x\in \Bvuw$ such that $c(\x) = \y$
is 
\begin{itemize}[itemsep=5pt]
    \item $5$ if $n=2$,
    \item $3$ if $n>2$ is even, and
    \item at most $2$ when $n$ is odd.
\end{itemize}
\end{lemma}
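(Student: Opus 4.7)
The plan is to classify preimages of $\y$ under $c$ by the structure of the final block of digits (the ``tail'') of each candidate $\x$, noting that $c$ acts as the identity on digits strictly before this tail. The identity $\x = \y$ is always a preimage, so it remains to count non-identity preimages and add one.

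For each admissible tail structure enumerated in the definition of $c$, the image tail in $\y$ is uniquely determined: the sign $\delta$ is forced by the final digit of $\y$, and the length parameter is recoverable from the zero-runs (and other distinguished digits) in $\y$'s tail. So each admissible tail structure contributes at most one preimage. The total count is one (for identity) plus the number of admissible tail structures that (i) produce a terminal pattern matching $\y$ and (ii) yield a minimal $\x \in \Bvuw$ for some $u,v,w$.

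For $n$ odd, only the tail $\delta(\fn+1)$ can occur among non-identity forms, producing $\y$ ending in $(-\delta\fn, \delta)$; this contributes at most one more preimage, for a total of at most $2$. For $n > 2$ even the non-identity tails are the single $\delta(\fne+1)$, a run $(\delta\fne, \ldots, \delta\fne)$ of length at least $2$, and a run ending in $\delta(\fne+1)$. The third produces the penultimate digit $-\delta(\fne-2)$ in $\y$, incompatible with the other cases whose penultimate digit is $-\delta(\fne-1)$. The first two coexist precisely when $\y$ has tail $(-\delta\fne, -\delta(\fne-1), \ldots, -\delta(\fne-1), \delta)$ of length at least $4$, so that the validity condition $|y_{k_\y - 2}| < \fne$ for the single-digit preimage holds; together with the identity this gives the maximum of $3$.

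For $n = 2$ the analysis is the main technical obstacle. The bound $|x_\kx| \le 3$ permits additional tail forms involving $\delta 3$ as well as $\delta 2$, and some of these forms produce identical terminal patterns in $\y$ once the length parameters are aligned (for instance, the tails $(\delta, \ldots, \delta)$ and $(\delta, \ldots, \delta, \delta 3)$ both yield $\y$ patterns of the shape $(-\delta, 0, \ldots, 0, \delta)$). The minimality check is also more delicate: when $\kx \ge \max(u,w)$, Lemma~\ref{lemma:adjacent_digits} does not apply and one must directly compare $|\eta_{u,v,w}(\x)|$ and the lexicographic order of $|\x|$ to those of nearby vectors $\x + \z$ for $\z \in \mathcal{L}_0$. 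I expect this to exclude certain candidates (for example, the tail $(\delta, \delta 2)$ should fail the lex condition in all allowable shapes), while confirming the remaining candidates are minimal. Systematic enumeration of the compatible and mutually exclusive patterns, together with these minimality checks, should yield the stated bound of $5$.
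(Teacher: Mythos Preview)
Your approach for $n$ odd and $n>2$ even is essentially the paper's: enumerate the possible terminal patterns of $\y$ that arise as images under $c$, observe that the final digit determines $\delta$ and the rest of the pattern determines $j$, and count how many of these patterns can simultaneously be suffixes of a single $\y$. Together with the identity preimage $\x=\y$ this gives the bounds $2$ and $3$.

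Your treatment of $n=2$, however, is not a proof but a plan. You write that a ``systematic enumeration \ldots should yield the stated bound of $5$'' and that you ``expect'' certain minimality checks to exclude candidates; none of this is carried out. The paper's proof does the enumeration explicitly: it lists five terminal patterns in $\y$, namely
\[
(-\delta,0,\delta),\quad (-\delta,[0,\dots,0],0,0,\delta),\quad (-\delta,[0,\dots,0],-\delta,0,\delta),\quad (0,\delta),\quad (-\delta,[0,\dots,0],0,\delta),
\]
writes down the unique preimage $\x$ for each, and then observes that at most four of these patterns can simultaneously be suffixes of a given $\y$; adding the identity preimage yields five. You need to do this enumeration, not merely assert that it can be done.

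There is also an unnecessary complication in your sketch. You propose to verify, case by case, that each candidate preimage $\x$ is genuinely minimal in some $\Bvuw$, invoking direct comparisons of $|\eta_{u,v,w}(\x)|$ and the lexicographic order when Lemma~\ref{lemma:adjacent_digits} does not apply. But the lemma asks only for an \emph{upper} bound on the number of minimal preimages, so it suffices to bound $|c^{-1}(\y)|$ as a set-theoretic preimage, regardless of whether each $\x$ is minimal. The paper does exactly this and never checks minimality of the individual $\x$; your proposed minimality analysis is not needed and, in the $n=2$ case, is where your argument stalls.
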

\begin{proof}
The proof reduces to considering the digit comparison
in Equation~\eqref{equation:cx_digits} and its variants, and observing
the relationship between the final digits of $c(\x)$ and the final digits of $\x$.  

First let $n>2$ be even.
In the vector subsequences below, any terms in square brackets may be omitted from the expression.
If the vector $\y$ ends with a subsequence $(y_j, \dots ,y_{\ky})$ of the form 
\begin{itemize}[itemsep=5pt]
\item $(-\delta\fne, \left[ -\delta(\fne-1), \dots,-\delta(\fne-1)\right], -\delta(\fne-2), 1)$,
then a preimage under $c$ is $\x = \y - \sum_{i=j}^{\ky-1} \wf{i}$.
\item $(-\delta\fne,  \left[-\delta(\fne-1), \dots,-\delta(\fne-1) \right],-\delta(\fne-1), 1)$, 
then a preimage under $c$ is $\x = \y - \sum_{i=j}^{\ky-1} \wf{i}$.
\item $(-\delta(\fne-1), 1)$, then a preimage under $c$ is $\x = \y -\wf{\ky-1}$.
\end{itemize}
For all $\y$, it might be the case that $\x=\y$ is a preimage. By construction, the preimages listed
above are the only possibilities.  Only two sequences of the above forms
above can overlap.  That is, $\y$ can only contain at most two subsequences of the forms
above.  Therefore, $\y$ can have at most $3$ preimages if $n>2$ is even.
When $n$ is odd, only the final bullet above applies, and we conclude that $\y$ has at most two preimages in this case.

We perform the same analysis when $n=2$.  If the vector $\y$ ends with a subsequence
$(y_j, \dots ,y_{\ky})$ of the form 
\begin{itemize}[itemsep=5pt]
\item $(-\delta,0,\delta)$, then a preimage under $c$ is 
\[\x = \y - 2\delta\wf{\ky-2} - \delta\wf{\ky-1}.\]
\item $(-\delta,[0,\dots,0],0,0,\delta)$, then a preimage under $c$ is
\[\x = \y - \delta(\sum_{i=j}^{\ky-3}\wf{i}+ 2\wf{\ky-2} + \wf{\ky-1}).\]
\item $(-\delta,[0,\dots,0],-\delta,0,\delta)$, then a preimage under $c$ is
\[\x = \y - \delta(\sum_{i=j}^{\ky-3}\wf{i}+ 2\wf{\ky-2} + \wf{\ky-1}).\]
\item $(0,\delta)$, then a preimage under $c$ is $\x =\y - \delta\wf{\ky-1}$.
\item $(-\delta,[0,\dots,0],0,\delta)$, then a preimage under $c$ is $\x = \y - \delta\sum_{i=j}^{\ky-1}\wf{i}$.
\end{itemize}
Again it might be the case that $\x = \y$ is a preimage.  
At most four subsequences of these forms can overlap, so we conclude that $\y$ can have at most five preimages under $c$.
\end{proof}

We now show that the cardinality of the set of all geodesics
is within a uniform constant multiple of the cardinality
of the set of geodesics of strict shape $1$.  
By carefully considering geodesics of different shapes, we could obtain a stronger inequality in the following corollary of Lemmas~\ref{lemma:shape_map}
and~\ref{lemma:clamp_degree}.
However, our result is sufficient to prove Theorem~\ref{theorem:growth_rate}.

\begin{corollary}\label{corollary:shapes_injection}
In the notation above, we have
\[
|\OO_n(N)| \le |S_n(N)| \le 20|\OO_n(N+3)|.
\]
Consequently,
the growth rates of the sequences $\OOn$ and $\{|S_n(N)|\}_{n \in \N}$ are identical.
\end{corollary}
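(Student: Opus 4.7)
The plan is to establish the two inequalities in the corollary separately and then deduce the growth rate equality from Lemma~\ref{lemma:exp_growth}.

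For the left inequality $|\OO_n(N)| \le |S_n(N)|$, I would map each word in $\OO_n(N)$ to the group element it represents. Since the word is a geodesic of length $N$, its image lies in $S_n(N)$. To see injectivity, suppose two strict shape $1$ geodesics $\eta_{u,v,w}(\x)$ and $\eta_{u',v',w'}(\x')$ represent the same element of $BS(1,n)$. Normal form uniqueness forces $(u,v,w) = (u',v',w')$, so $\x, \x'$ are both minimal vectors in $\Bvuw$ with $\kx, k_{\x'} < \max(u,w)$. In this regime the minimal vector is unique: Lemma~\ref{lemma:odd_box}(1) handles $n$ odd directly, while for $n$ even the lex-minimum convention on absolute values built into our definition of minimality gives $\x = \x'$.

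For the right inequality $|S_n(N)| \le 20|\OO_n(N+3)|$, I would choose, for each $g = t^{-u}a^vt^w \in S_n(N)$, a canonical minimal vector $\x_g \in \Bvuw$ (unique in the even case by the lex-min convention, and picked canonically from the at most two minimal candidates in the odd case via Lemma~\ref{lemma:odd_box}). The assignment $g \mapsto (u,w,\x_g)$ is injective because $g$ is recovered from this triple by $g = t^{-u}a^{\Sigma(\x_g)}t^w$. Applying the map $\Phi$ of Lemma~\ref{lemma:shape_map}, I obtain a triple $(u',w',c(\x_g))$ whose associated word $\eta_{u',v',w'}(c(\x_g))$ is a strict shape $1$ geodesic of length exactly $N+3$, hence a word in $\OO_n(N+3)$; this word determines the triple since a strict shape $1$ word uniquely records its initial $t^{-1}$-block, its digit sequence, and its terminal $t$-block.

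To bound the degree of this composition, I would fix a target word in $\OO_n(N+3)$ corresponding to a triple $(u',w',\y)$ and enumerate preimages under $\Phi$. Any preimage triple $(u,w,\x)$ consists of a minimal $\x$ with $c(\x) = \y$ together with a shape in $\{1,2,3,4\}$, which pins down $(u,w)$ via the shape-dependent formula defining $\Phi$. Lemma~\ref{lemma:clamp_degree} yields at most five such $\x$ (attained only when $n=2$), and each admits at most four shapes, giving the uniform bound of $20$ preimages and hence the right inequality.

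Finally, for the growth rate consequence, if $|S_n(N)| = \Theta(\lambda^N)$ then the left inequality forces $|\OO_n(N)|$ to grow at rate at most $\lambda$, while the right inequality together with Lemma~\ref{lemma:exp_growth}(1) (which absorbs the shift by $3$) forces $|\OO_n(N)|$ to grow at rate at least $\lambda$, yielding the claimed equality of growth rates for $\OOn$ and $\{|S_n(N)|\}_{N \in \N}$. The essential work has been packaged into Lemmas~\ref{lemma:shape_map} and~\ref{lemma:clamp_degree}; what remains is the bookkeeping of the preimage count for $\Phi$, and I do not anticipate a conceptual obstacle beyond that.
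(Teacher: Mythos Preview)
Your proposal is correct and follows essentially the same approach as the paper's proof: the left inequality, which the paper dismisses with ``Clearly $|\OO_n(N)| \le |S_n(N)|$,'' is given a fuller justification in your write-up via uniqueness of minimal vectors; the right inequality is obtained exactly as in the paper by applying Lemma~\ref{lemma:shape_map} and bounding the degree of $\Phi$ by $5 \times 4 = 20$ using Lemma~\ref{lemma:clamp_degree}; and the growth-rate consequence is deduced from Lemma~\ref{lemma:exp_growth}. The only cosmetic difference is that you make explicit the choice of a canonical minimal vector $\x_g$ for each group element before applying $\Phi$, whereas the paper leaves this implicit.
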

\begin{proof}
Clearly $|\OO_n(N)| \le |S_n(N)|$.
It follows from Lemma~\ref{lemma:shape_map} that  $\Phi$
maps $S_n(N)$ to $\OO_n(N+3)$, 
so to prove the rightmost inequality we must show that
the degree of $\Phi$ is bounded above by $20$.
First we apply Lemma~\ref{lemma:clamp_degree}
to conclude that the degree of $c$ is at most $5$.
As $\Phi$ is defined in
four cases, we see that any output triple
$(u',w',\x')$ could arise from at most five preimages of $\x'$
in each of the four cases of $\Phi$.  Thus the degree of $\Phi$
is at most $20$, as desired.

It follows from Lemma~\ref{lemma:exp_growth} that the growth rates of $\OOn$ and $\{|S_n(N)|\}_{n \in \N}$ are identical.
\end{proof}

Recall that the \emph{growth series} of a function $f(N)$
is an infinite series
$R(x) = \sum_{i=0}^\infty f(i)x^i$.  We will be interested
in growth series which records the number of paths in a
finite automata starting at a given state.

It follows from Corollary~\ref{corollary:shapes_injection} that to determine the
growth rate of $BS(1,n)$ it suffices
to determine the growth rate of the sequence $\OOn$.
As discussed in Section~\ref{section:regular_languages}, it is nontrivial
to do this in general because the number of states in $\OO_n$
is not uniformly bounded.  Our solution is to formally define
a growth series for every state in $\DD_n'$ which counts the number of
paths in $\OO_n$ starting in that state.  We can then write
down a matrix equation of fixed size in these series and
determine their growth rates.  That is, we trade a computation
with arbitrarily large matrices over the integers (computing
an eigenvalue) for a computation with fixed size matrices whose
entries are infinite series.
Also note that for the purpose of computing the growth rate of the sequence $\OOn$, we will ignore
the state $s_{\ti}$; it follows from Lemma~\ref{lemma:fsa_growth} that the growth
rate of the strongly connected component containing
the $\alpha$-digit
expansions of all the states $s_i$ will determine the growth rate of the sequence  $\OOn$.

\begin{theorem}
\label{theorem:growth_rate}
Let $\alpha = \lfloor\frac{n}{2}\rfloor$.  The growth rate of the sequence
$\OOn$, and hence $NS(1,n)$, for $n$ odd is the reciprocal
of the smallest magnitude root of
\[
1-x-\sum_{i=1}^\alpha 2x^{i+1},
\]
and for $n$ even is the reciprocal of
the smallest magnitude root of
\[
1-2x - x^2 + 2x^{\alpha+1} - 2x^{\alpha+2} + 2x^{2\alpha+2}.
\]
\end{theorem}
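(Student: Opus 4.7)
The plan is to use Corollary~\ref{corollary:shapes_injection} to reduce the computation to finding the growth rate of the sequence $\OOn$, and then to set up a fixed-size system of generating function equations indexed by the states of $\DD_n'$. This circumvents the fact that $\OO_n$ itself has a number of states that grows with $n$ (each $s_i$ in $\DD_n'$ is replaced by $2\alpha+1$ states in its $\alpha$-digit expansion): the block structure of $\OO_n$ collapses neatly once we track only the ``center'' states $s_{i,0}$ of each expansion.

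For each state $s_i$ of $\DD_n'$, I will define $R_i(x) = \sum_{N\ge 0} f_i(N)\, x^N$, where $f_i(N)$ counts accepted paths in $\OO_n$ of length $N$ starting at $s_{i,0}$. Since each $s_{i,0}$ is itself an accept state, every $R_i(x)$ has constant term $1$. Any nontrivial accepted path from $s_{i,0}$ begins by ``emitting a digit'' $d$ corresponding to some outgoing edge of $s_i$ in $\DD_n'$, which in $\OO_n$ means traversing $|d|$ edges labeled by $a^{\pm 1}$ followed by one $t$-edge to the center $s_{j,0}$ of the successor state's expansion. Each such transition contributes a term $x^{|d|+1} R_j(x)$ to the recursion for $R_i(x)$.

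For $n$ odd, $\DD_n'$ has the single state $s_0$ with self-loops for each digit $d$ with $0\le |d|\le \alpha$, yielding $R_0(x) = 1 + xR_0(x) + 2\sum_{j=1}^\alpha x^{j+1} R_0(x)$, whence $R_0(x) = 1/(1 - x - \sum_{j=1}^\alpha 2 x^{j+1})$. For $n$ even, reading Figure~\ref{fig:fsa_Dnprime} and using the reflection symmetry $R_1(x) = R_2(x)$ gives the system
\begin{align*}
R_0 &= 1 + xR_0 + 2\sum_{j=1}^{\alpha-1} x^{j+1} R_0 + 2x^{\alpha+1} R_1, \\
R_1 &= 1 + \sum_{j=0}^{\alpha-1} x^{j+1} R_0.
\end{align*}
Eliminating $R_1$ using the geometric-sum identities $\sum_{j=0}^{\alpha-1} x^{j+1} = (x - x^{\alpha+1})/(1-x)$ and $\sum_{j=1}^{\alpha-1} x^{j+1} = (x^2 - x^{\alpha+1})/(1-x)$ and clearing the common factor $1-x$ produces $R_0(x) = (1-x)(1 + 2x^{\alpha+1})/P(x)$ with $P(x) = 1 - 2x - x^2 + 2x^{\alpha+1} - 2x^{\alpha+2} + 2x^{2\alpha+2}$, matching the stated polynomial.

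Finally, since $s_{0,0}$ lies in the strongly connected component of $\OO_n$ containing all $\alpha$-digit expansions (the tail states $\texttt{start}$ and $s_{t^{-1}}$ being irrelevant by Lemma~\ref{lemma:fsa_growth}), the growth rate of $\OOn$ equals the growth rate of the coefficients of $R_0(x)$. Since $R_0(x)$ is rational with nonnegative integer coefficients and the transition matrix restricted to this SCC is irreducible, the dominant pole $\rho$ is simple by Perron-Frobenius and the coefficients satisfy $f_{s_{0,0}}(N) = \Theta(\rho^{-N})$; the fact that $BS(1,n)$ has exponential growth ($\rho < 1$) ensures that the $(1-x)$ factor appearing in the numerator for $n$ even does not cancel the smallest-magnitude root of $P$. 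Corollary~\ref{corollary:shapes_injection} then transfers the growth rate to $\{|S_n(N)|\}$. The main obstacle is the algebraic bookkeeping for $n$ even to extract exactly the polynomial $P(x)$; everything else is either a direct translation of the automaton structure or a standard appeal to Lemma~\ref{lemma:fsa_growth} and rational generating function theory.
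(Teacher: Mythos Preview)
Your proof is correct and follows essentially the same approach as the paper's: both reduce to the growth rate of $\OO_n$ via Corollary~\ref{corollary:shapes_injection}, define generating functions $R_i(x)$ indexed by the center states $s_{i,0}$ of the $\alpha$-digit expansions, set up the resulting linear system from the transitions of $\DD_n'$, and extract the denominator polynomial. Your version is slightly cleaner in places---you identify the constant term as exactly $1$ rather than an unspecified error polynomial, exploit the symmetry $R_1=R_2$ to reduce the even case to two equations, and carry out the elimination explicitly (including the check that the numerator does not cancel the dominant pole)---whereas the paper applies Cramer's rule to the full $3\times 3$ system; but the underlying argument is identical.
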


\begin{proof}
First assume that $n$ is odd.
Denote by $R(x)$ the growth series of paths starting
at state $s_{0,0}$.  By consulting Figure~\ref{fig:fsa_Dnprime},
we see that for each $i$ with $-\alpha \le i \le \alpha$,
there is a path of length $|i|+1$ in $\OO_n$ which returns
to state $s_{0,0}$ consisting
of $|i|$ edges with label $a^{\pm 1}$ followed by $t$.  Thus we can write the recurrence
\[
R(x) = \left[x + 2\sum_{i=1}^\alpha x^{i+1}\right]R(x) + P(x),
\]
where $P(x)$ is a polynomial of degree at most $\alpha+1$.
This ``error'' polynomial arises because not all ways of returning
to state $s_{0,0}$ are possible
if the remaining allowed path is shorter than length $\alpha+1$.
Thus we have
\[
R(x) = P(x)\left[1-x-\sum_{i=1}^\alpha 2x^{i+1}\right]^{-1}.
\]
It is a general fact about growth series that the growth
rate is the reciprocal of the smallest root of the
denominator when the growth series is expressed as a
rational function.  See, for example,~\cite{Flajolet} \S IV.
Solving the recurrence above for $R(x)$
and applying this fact proves the theorem in this case.

Now assume that $n$ is even.  Here $\alpha = \fn = \fne$.
For each state $s_i$ in $\DD_n'$,
denote the growth series of paths starting at the state
$s_{i,0}$ in $\OO_n$ by $R_i(x)$.  We will express each of
the $R_i(x)$ as a polynomial combination of all the
$R_j(x)$.  We can then
solve a matrix equation to find these growth rates.
First consider $R_0(x)$.
By consulting Figure~\ref{fig:fsa_Dnprime}, we see that
for each $i$ with $-\alpha < i < \alpha$, there is a path of
length $|i|+1$ in $\OO_n$  which
returns to state $s_{0,0}$ consisting of $|i|$ edges with label $a^{\pm}$
followed by $t$.  There is also
a path of length $\alpha+1$ to state $s_{1,0}$ and a
path of length $\alpha+1$ to state $s_{2,0}$.  Thus we can write
the recurrence
\[
R_0(x) = \left[x + \sum_{i=1}^{\alpha-1}2x^{i+1}\right]R_0(x)
       + x^{\alpha+1}R_1(x) + x^{\alpha+1}R_2(x) + P_0(x),
\]
where $P_0(x)$ is a polynomial of degree at most $\alpha+1$.
This ``error'' polynomial arises because not all ways of returning
to $s_{0,0}$ or transiting to the other states are possible
if the remaining allowed path is shorter than length $\alpha+1$.

If we start in state $s_{1,0}$, then for each $i$
with $0 \le i < \alpha$,
there is a path of length $|i|+1$ to $s_{0,0}$.  Therefore,
\[
R_1(x) = \left[\sum_{i=0}^{\alpha-1} x^{i+1}\right]R_0(x) + P_1(x).
\]
The computation for $R_2(x)$ is similar.  

These computations yield the
matrix equation
{\small
\[
\left[
\begin{array}{c}
R_0(x) \\
R_1(x) \\
R_2(x) \\
\end{array}
\right]
=
\left[
\begin{array}{ccc}
x + 2\sum_{i=1}^{\alpha-1} x^{i+1} & x^{\alpha+1} & x^{\alpha+1} \\
\sum_{i=0}^{\alpha-1} x^{i+1}& 0            & 0            \\
\sum_{i=0}^{\alpha-1} x^{i+1}& 0            & 0            \\
\end{array}
\right]
\left[
\begin{array}{c}
R_0(x) \\
R_1(x) \\
R_2(x) \\
\end{array}
\right]
+ \left[
\begin{array}{c}
P_0(x) \\
P_1(x) \\
P_2(x) \\
\end{array}
\right].
\]
}
Letting $R(x)$ be the column vector of growth series, $P(x)$ the column vector of polynomials of degree at most $\alpha+1$, and $M(x)$
the matrix, we wish to solve the equation $(I-M(x))R(x) = P(x)$.
Cramer's rule implies that
$\det(I-M(x))$ is the denominator for each generating function
and thus the reciprocal of its smallest root is
the growth rate.  A computation to simplify $\det(I-M(x))$
yields the polynomial stated in the theorem.

It follows immediately from Corollary~\ref{corollary:shapes_injection} that the growth rate of $BS(1,n)$ is also given by the reciprocal of the root of smallest magnitude of the above polynomials.
\end{proof}

\subsection{Growth rate examples and the limiting case}

Using Theorem~\ref{theorem:growth_rate}, it is straightforward
to compute the growth rate of $BS(1,n)$ for small values
of $n$.  These are shown in Table~\ref{fig:growth_rates}.

\begin{table}[ht!]
\begin{center}
\begin{tabular}{ccc}
$n$ & smallest root & reciprocal (growth rate) \\
\hline
2 & 0.589754512301458  &   1.69562076955986 \\
3 &  1/2               &         2 \\
4 & 0.456552637014853  &   2.19032794671486 \\
5 & 0.440619700538199  &   2.26953084208114 \\
6 & 0.428577480668369  &   2.33330038349307 \\
7 & 0.423853799069783  &   2.35930408597178 \\
8 & 0.417979169653687  &   2.39246372212410 \\
$\vdots$ & $\vdots$  & $\vdots$ \\
$\infty$ & $\sqrt{2}-1$ & $\sqrt{2} + 1$
\end{tabular}
\end{center}
\caption{The growth rate of $BS(1,n)$ for small values of $n$.  The meaning
of the limiting growth rate for $n=\infty$ is discussed
in the text.}
\label{fig:growth_rates}
\end{table}

Table~\ref{fig:growth_rates} lists a growth rate for $n=\infty$, indicating the limit of the growth rates for $BS(1,n)$ as
$n\to\infty$.  We can compute this quantity both independently and as a double-check on the polynomials in
Theorem~\ref{theorem:growth_rate}.  In the odd case, note that on any open disk
of radius less than $1$, as $n\to\infty$ the sequence of
polynomials  in
Theorem~\ref{theorem:growth_rate} converges uniformly to the power series
\[
1-x-2\sum_{i=1}^\infty x^{i+1} = 1-x-\frac{2x^2}{1-x},
\]
whose smallest root is $\sqrt{2}-1$.  In the even case, the
sequence of polynomials converges uniformly to $1-2x-x^2$,
which has the same roots as the power series above,
so the even and odd cases agree in the limit.

As an independent check on the limiting case, consider what
a geodesic of strict shape $1$ would look like for ``infinite" $n$:
there would be no bound on the powers of $a$ and $a^{-1}$,
so the set of geodesics of strict shape $1$ would be a regular language
on the three symbols $\{a,a^{-1},t\}$, subject to the condition that $a$ and $\ai$ are never adjacent.
We ignore here the
possible initial power of $t^{-1}$, which does not affect the
growth rate.  A finite automata accepting this language
has adjacency matrix
\[
\left[ \begin{array}{ccc}
 1 & 1 & 1 \\
 1 & 1 & 0 \\
 1 & 0 & 1
 \end{array}\right]
\]
whose largest eigenvalue is $\sqrt{2}+1$.

\bibliographystyle{plain}
\bibliography{main}

\end{document}